\newtheorem{proposition}{Proposition}
\numberwithin{proposition}{section}
\newtheorem{lemma}{Lemma}
\numberwithin{lemma}{section}
\newtheorem{theorem}{Theorem}
\numberwithin{theorem}{section}
\newcounter{picno}
\newcommand{\pic}[1]{\refstepcounter{picno}\label{#1}}
\newcommand{\la}{\lambda}
\DeclareMathOperator{\charac}{char}
\title{Brion's theorem for Gelfand-Tsetlin polytopes}
\thanks{This work was supported in part by the Simons Foundation and the Moebius Contest Foundation for Young Scientists.}
\author{I. Makhlin}
\address{Igor Makhlin:\newline
National Research University Higher School of Economics, 
International Laboratory of Representation Theory and Mathematical Physics\\
Vavilova str. 7, 117312, Moscow, Russia,\newline
{\it and }\newline
Landau Institute for Theoretical Physics, prospekt Akademika Semenova 1A, 142432 Chernogolovka, Russia}
\email{imakhlin@mail.ru}
\begin{document}

\maketitle

\begin{abstract}
This work is motivated by the observation that the character of an irreducible $\mathfrak{gl}_n$-module (a Schur polynomial), being the sum of exponentials of integer points in a Gelfand-Tsetlin polytope, may be expressed via Brion's theorem. The main result is that in the case of a regular highest weight the contributions of all non-simplicial vertices vanish while the number o simplicial vertices is $n!$ and their contributions are precisely the summands in Weyl's character formula.
\end{abstract}

\section{Introduction}

In this section we recall some preliminaries and then state the main result.

\subsection{Representations of $\boldsymbol{\mathfrak{gl}_n(\mathbb C)}$ and Gelfand-Tsetlin polytopes}
Consider the Lie algebra $\mathfrak{gl}_n(\mathbb{C})$ comprised of complex $(n\times n)$-matrices. Within the Cartan subalgebra of diagonal matrices we consider the basis of matrices with a single nonzero entry equal to one. The dual basis constitutes a basis in the lattice of integral weights; this will be our basis of choice therein.

Integral dominant weights are then precisely those the coordinates of which comprise a non-increasing sequence of integers. Every such weight $\lambda$ provides a finite-dimensional irreducible representation~$L_\lambda$.

We start out by writing Weyl's formula for the character of such a representation: 
$$
\charac{L_\lambda}=\sum_{w\in S_n} w\bigg(\frac{e^\lambda}{\prod_{1\le i<j\le n}(1-x_j/x_i)}\bigg).
$$
Here for a weight $\mu$ we set $e^\mu=x_1^{\mu_1}\ldots x_n^{\mu_n}$ and make use of the fact that the Weyl group of our algebra is $S_n$ acting on weights by permuting the coordinates.

The number $n$ and the weight $\lambda=(\la_1,\dots,\la_n)$ are fixed throughout the paper.

The Gelfand-Tsetlin basis in $L_\lambda$ is parametrized by the so-called Gelfand-Tsetlin (GT) patterns. Each such pattern is a number triangle $\{A_{i,j}\}$ with $0\le i\le n-1$ and $1\le j\le n-i$. The top row of the triangle is simply $A_{0,j}=\lambda_j$. The remaining elements are arbitrary integers satisfying 
\begin{equation}\label{gt}
A_{i,j}\ge A_{i+1,j}\ge A_{i,j+1}.
\end{equation}
These patterns are commonly visualized in the following manner:
\begin{center}
\begin{tabular}{ccccccc}
$A_{0,1}$ &&$ A_{0,2}$&& $\ldots$ && $A_{0,n}$\\
&$A_{1,1}$ &&$ \ldots$&& $A_{1,n-1}$ &\\
&&$\ldots$ &&$ \ldots$& &\\
&&&$A_{n-1,1}$ &&&
\end{tabular}
\end{center}
In other words, every element is no greater than its upper-left neighbor and no less than its upper-right neighbor except for, naturally, the elements in row 0 (row $i$ is comprised of elements of the form $A_{i,\cdot}$).

Denote the set of GT-patterns via $\mathbf{GT}_\lambda$. Let $A\in \mathbf{GT}_\lambda$ correspond to the basis vector $v_A$ with weight $\mu_A$. With respect to the chosen basis the weight $\mu_A$ has coordinates
\begin{equation}\label{weight}
(\mu_A)_i=\sum_j A_{i-1,j}-\sum_j A_{i,j}
\end{equation}
where $A_{n,j}=0$.

Herefrom one obtains the well-known combinatorial formula for irreducible $\mathfrak{gl}_n$-characters, i.e. Schur polynomials $s_\lambda$:
\begin{equation}\label{schurcomb}
s_\lambda(x_1,\dots,x_n)=\sum_{A\in\mathbf{GT}_\lambda}e^{\mu_A}.
\end{equation}
(It should be noted that here we understand Schur polynomials in a generalized sense: the coordinates of $\lambda$ may be negative which makes $s_\lambda$ a Laurent polynomial and not necessarily a common polynomial.)

Now observe that every GT-pattern may be viewed as an integer point in a real space of dimension ${n+1}\choose 2$. One sees that these patterns then comprise precisely the set of integer points in a certain polytope, the Gelfand-Tsetlin polytope.

By definition, the Gelfand-Tsetlin polytope $GT_\lambda$ is a polytope in the space $\mathbb{R}^{{n+1}\choose 2}$ with coordinates labeled by pairs of integers $0\le i\le n-1$ and $1\le j\le n-i$ and is defined by the conditions $A_{0,j}=\lambda_j$ and the inequalities~(\ref{gt}). It is not hard to see that it is a bounded polytope of codimension no less than $n$.

GT-patterns then indeed comprise the set of that polytope's integer points and formula~(\ref{schurcomb}) tells us that the Schur polynomial is a sum of certain exponentials of these integer points. Such sums of exponentials may be computed via Brion's theorem which we now discuss.

\subsection{Brion's theorem}
Consider the vector space $\mathbb{R}^m$ with a fixed basis and lattice of integer points $\mathbb{Z}^m\subset\mathbb{R}^m$. For any subset $P\subset\mathbb{R}^m$ we define its generating function 
$$
S(P)=\sum_{a\in P\cap\mathbb{Z}^m} \mathbf x^a,
$$
a formal Laurent series in the variables $x_1,\dots,x_m$. (The formal exponential of point $a$ is defined as $\mathbf x^a=x_1^{a_1}\cdots x_m^{a_m}$.)

Let $P$ be a convex rational polytope (the intersection of a finite set of half-spaces given by non-strict linear inequalities with integer coefficients). In this case there exists a polynomial $q\in\mathbb{Z}[x_1^{\pm 1}\!\!,\dots,x_m^{\pm 1}]$ such that the product $qS(P)$ is finite, i.e. is also a (Laurent) polynomial. Moreover, the rational function $\tfrac{qS(P)}q$ is independent of the choice of $q$ and will be denoted $\sigma(P)$ (it is sometimes referred to as the {\it integer-point transform}).

At every vertex $v$ of $P$ we have the tangent cone $C_v$. Brion's theorem is the following identity in the field of rational functions.

\begin{theorem}[\cite{bri}, \cite{kp}]\label{brion} One has the identity
$$
\sigma(P)=\sum_{v\text{ vertex of }P}\sigma(C_v).
$$
\end{theorem}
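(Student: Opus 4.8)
The plan is to reduce the identity to the case of simplicial cones by a triangulation argument, then to establish the identity for a single simplicial cone by direct computation, and finally to handle the convergence issues that make the formal manipulation legitimate. First I would recall the notion of a \emph{valuation} on the algebra of polyhedra: the map $P \mapsto \sigma(P)$ extends to a finitely additive measure on the Boolean algebra generated by closed convex polyhedra in $\mathbb R^m$, in the sense that $\sigma(P_1 \cup P_2) = \sigma(P_1) + \sigma(P_2) - \sigma(P_1 \cap P_2)$. The key nontrivial input here is that unbounded polyhedra containing a line have $\sigma = 0$; this is what ultimately kills all the ``extra'' terms when one adds up cone contributions. Granting the valuation property, one reduces to the simplicial case as follows: fix a generic vector $\xi$, and for each vertex $v$ of $P$ triangulate the tangent cone $C_v$ using $\xi$ (or, equivalently, apply Brion's identity to the polar dual / use a pulling triangulation of $P$ induced by a generic point); the cross-terms from the inclusion–exclusion telescope because every bounded face of the induced polyhedral subdivision, and every unbounded piece, appears with total coefficient zero.

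Next I would treat a single simplicial cone $C$ with apex at a lattice point $v$ and primitive integer ray generators $u_1, \dots, u_d$. Here $\sigma(C)$ can be computed explicitly: the fundamental half-open parallelepiped $\Pi = \{\sum t_k u_k : 0 \le t_k < 1\}$ contains finitely many lattice points, and every lattice point of $C$ is uniquely $p + \sum n_k u_k$ with $p \in \Pi \cap \mathbb Z^m$ and $n_k \in \mathbb Z_{\ge 0}$, which gives $\sigma(C) = \mathbf x^v \cdot \big(\sum_{p \in \Pi} \mathbf x^p\big) \cdot \prod_{k=1}^d (1 - \mathbf x^{u_k})^{-1}$, a genuine rational function. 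So for simplicial $P$ the statement becomes a finite identity among explicit rational functions, which I would verify by choosing a vector $\xi$ with $\langle \xi, u\rangle \ne 0$ for all relevant edge directions $u$, expanding each $\sigma(C_v)$ as an \emph{honest convergent} Laurent series in the half-space $\langle \xi, \cdot\rangle \gg 0$ after substituting $x_i = e^{\tau \xi_i}$, and observing that the formal sum of these series, term by term, recovers $\sum_{a \in P \cap \mathbb Z^m} \mathbf x^a$ — because for a fixed lattice point $a$ the number of cones $C_v$ whose series ``sees'' $a$ is accounted for by the Euler-characteristic identity $\sum_v [\text{$a \in$ some translate region of $C_v$}] = 1$, valid since $P$ is a compact polytope. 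This last point is really a statement that the polar decomposition / Brianchon–Gram relation holds, which is where the combinatorial heart lies.

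The main obstacle, and the step I would spend the most care on, is making the ``formal sum of Laurent series equals $\sigma(P)$'' argument rigorous: the individual series $\sigma(C_v)$ converge on different, possibly disjoint, open cones in the space of exponents, so they cannot all be summed in a single common domain. The standard fix is to introduce a single generic one-parameter specialization $x_i = e^{\tau \xi_i}$ with $\xi$ chosen so that no edge direction of any tangent cone is orthogonal to $\xi$; then each $\sigma(C_v)$, as a rational function in $\tau$-exponentials, has a Laurent expansion valid for $|\operatorname{Re}\tau|$ large in one fixed direction, all these expansions live in the same field, and the finite rational-function identity can be checked there. One must also confirm that this specialization is \emph{injective enough} not to collapse the identity — i.e. that distinct monomials $\mathbf x^a$ appearing in $S(P)$ stay distinct, which holds for generic $\xi$. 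I would then note that a rational-function identity that holds after a Zariski-dense family of such specializations holds identically, completing the proof. Finally I would remark that everything above is insensitive to translating $P$ by a lattice vector and to unimodular changes of coordinates, which is the reduction that let us assume apices are lattice points in the first place.
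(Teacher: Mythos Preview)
The paper does not prove this theorem at all: Theorem~\ref{brion} is stated with citations to \cite{bri} and \cite{kp} and the reader is referred to the textbooks \cite{barv} and \cite{beckrob} for discussion; it is then used as a black box throughout. So there is no ``paper's own proof'' to compare your proposal against.

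That said, your sketch follows one of the standard routes (essentially the Brianchon--Gram/Lawrence argument combined with the valuation property of $\sigma$, as in \cite{barv} or \cite{beckrob}) and is broadly sound. Two points deserve attention if you want to turn it into an actual proof. First, your closing reduction is incorrect: translating $P$ by a lattice vector cannot arrange that \emph{all} vertices become lattice points, since a rational polytope in the paper's sense may have genuinely non-integral vertices; your explicit formula $\sigma(C)=\mathbf{x}^v\cdot(\text{parallelepiped sum})\cdot\prod_k(1-\mathbf{x}^{u_k})^{-1}$ therefore needs the usual modification for rational-apex cones (enumerate lattice points of $C_v$ directly via a fundamental domain for the lattice inside the cone, rather than factoring out $\mathbf{x}^v$). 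Second, the step where ``for a fixed lattice point $a$ the number of cones whose series sees $a$ is $1$ by an Euler-characteristic identity'' is exactly the Brianchon--Gram relation $[P]=\sum_F(-1)^{\dim F}[\text{tangent cone at }F]$ together with the vanishing of $\sigma$ on cones containing a line; as written you have asserted rather than argued this, and you are also blurring it with the Lawrence--Varchenko signed-decomposition picture (your choice of a generic $\xi$ and expansion direction). Either approach works, but you should commit to one and carry it through.
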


The books~\cite[ch. 13]{barv} and~\cite[ch. 9]{beckrob} contain outstanding discussions of these topics.

\subsection{The main result}
The formal exponential of a point in the ${n+1}\choose 2$-dimensional space containing the GT-polytope is a monomial in ${n+1}\choose 2$ variables. Denote these variables $t_{i,j}$ labeling them in accordance with the coordinates.

Formula~(\ref{weight}) shows that for a GT-pattern $A$ the monomial $e^{\mu_A}$ is obtained from the monomial $\mathbf t^A$ (the formal exponential of a point) via the specialization
$$
t_{i,j}\mapsto
\begin{cases}
x_1&\text{when }\,i=0,\\
x_{i}^{-1} x_{i+1}&\text{when }\,i>0.
\end{cases}
$$
In general, for a rational function $Q$ in the variables $t_{i,j}$ we will write $F(Q)$ for the expression obtained from $Q$ by applying the above specialization.

Herefrom via fromula~(\ref{schurcomb}) we deduce the following fact.

\begin{theorem} We have
$$
s_\lambda(x_1,\dots,x_n)=F(S(GT_\lambda)).
$$
\end{theorem}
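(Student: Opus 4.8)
The plan is to simply unwind the definitions; the essential content is formula~(\ref{schurcomb}) together with the bookkeeping of exponents under the specialization $F$. First I would recall from the discussion preceding the statement that the integer points of $GT_\lambda$ are exactly the GT-patterns, i.e. $GT_\lambda\cap\mathbb Z^{\binom{n+1}{2}}=\mathbf{GT}_\lambda$, a \emph{finite} set because $GT_\lambda$ is bounded. Consequently
$$
S(GT_\lambda)=\sum_{A\in\mathbf{GT}_\lambda}\mathbf t^A
$$
is an honest Laurent polynomial in the variables $t_{i,j}$, and $F$, being a substitution of monomials (each $t_{i,j}$ going to a Laurent monomial in the $x_k$, hence to a unit), restricts to a ring homomorphism on such Laurent polynomials. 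In particular $F$ may be applied term by term, so $F(S(GT_\lambda))=\sum_{A\in\mathbf{GT}_\lambda}F(\mathbf t^A)$.

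The one computation to carry out is the identity $F(\mathbf t^A)=e^{\mu_A}$ for a fixed pattern $A$. Writing $F(\mathbf t^A)=\prod_{i,j}F(t_{i,j})^{A_{i,j}}$ and substituting, I would collect the total exponent of each $x_k$: it receives a contribution $+A_{k-1,j}$ from every entry of row $k-1$ (through $t_{k-1,j}\mapsto x_{k-1}^{-1}x_k$ when $k>1$, or $t_{0,j}\mapsto x_1$ when $k=1$) and a contribution $-A_{k,j}$ from every entry of row $k$ (through $t_{k,j}\mapsto x_k^{-1}x_{k+1}$), while rows $i\notin\{k-1,k\}$ contribute nothing. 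Hence the exponent of $x_k$ is $\sum_j A_{k-1,j}-\sum_j A_{k,j}$, which for $1\le k\le n$ is precisely $(\mu_A)_k$ by~(\ref{weight}); here the case $k=n$ relies on the convention $A_{n,j}=0$ from~(\ref{weight}), consistent with the absence of a row $n$ in the triangle, and the case $k=1$ uses that no factor $x_1^{-1}$ ever occurs. Therefore $F(\mathbf t^A)=\prod_{k=1}^n x_k^{(\mu_A)_k}=e^{\mu_A}$.

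Putting these together with~(\ref{schurcomb}) yields
$$
F(S(GT_\lambda))=\sum_{A\in\mathbf{GT}_\lambda}F(\mathbf t^A)=\sum_{A\in\mathbf{GT}_\lambda}e^{\mu_A}=s_\lambda(x_1,\dots,x_n).
$$
I do not expect any real obstacle here — the statement is essentially a matching of conventions, and the only point needing a moment's care is checking that the boundary rows (row $0$, which is sent to $x_1$, and the missing row $n$) are treated consistently with the convention $A_{n,j}=0$ in~(\ref{weight}); once the exponent count in the second paragraph is set up correctly, this is automatic. Note that Theorem~\ref{brion} is not used for this statement; it enters only in the subsequent analysis of the vertex cones.
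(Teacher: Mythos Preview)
Your proposal is correct and follows exactly the argument the paper sketches: the paper simply notes that the specialization $F$ sends $\mathbf t^A$ to $e^{\mu_A}$ (by formula~(\ref{weight})) and then invokes~(\ref{schurcomb}), which is precisely what you have spelled out, with an explicit exponent count for each $x_k$ that the paper leaves implicit.
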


The right-hand side above may be computed via Brion's theorem. The goal of this paper is to show how the resulting expression turns out to be the classical alternating formula for Schur polynomials (Weyl's character formula).

The answer is particularly instructive in the case of a regular weight $\lambda$, i.e. the $\lambda_j$ being pairwise distinct. It is provided by the below theorem which can be viewed as our main result.

\begin{theorem}\label{main}
For a regular dominant integral weight $\la$ the polytope $GT_\lambda$ has exactly $n!$ simplicial vertices. For every permutation $w\in S_n$ there is exactly one simplicial vertex $v$ with $F(\mathbf t^v)=w(e^\lambda)$. This vertex $v$ satisfies
$$
F(\sigma(C_v))=w\bigg(\frac{e^\lambda}{\prod_{i<j}(1-x_j/x_i)}\bigg)
$$
($C_v$ is the tangent cone). For every non-simplicial vertex one has $F(\sigma(C_v))=0$.
\end{theorem}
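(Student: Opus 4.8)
The plan is to evaluate $F(\sigma(C_v))$ separately at every vertex $v$ of $GT_\la$ and then add up the results using Brion's theorem together with the identity $s_\la=F(S(GT_\la))$. It is convenient to identify $F$ with the surjective lattice map $\pi\colon\mathbb Z^{\binom{n+1}2}\to\mathbb Z^n$ sending the coordinate $e_{i,j}$ to $e_1$ for $i=0$ and to $e_{i+1}-e_i$ for $i\ge 1$, so that $F(\mathbf t^a)=\mathbf x^{\pi(a)}$. Two structural facts set the stage: $GT_\la$ is a $\binom n2$-dimensional lattice polytope (for regular $\la$ the tight-inequality argument below shows every vertex has all its entries among $\la_1,\dots,\la_n$), and the affine map $A\mapsto\mu_A$ sends $GT_\la$ onto the permutohedron $\Pi=\mathrm{conv}(S_n\la)$ and carries each tangent cone $C_v$ onto the tangent cone $C_{\Pi,\mu_v}$ of $\Pi$ at $\mu_v$.

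First I would pin down the vertices. Since $\Pi$ is a lattice polytope and $\la$ is regular, the preimage under $A\mapsto\mu_A$ of a vertex $w(\la)$ of $\Pi$ is a nonempty face of $GT_\la$ containing exactly one lattice point (the weight $w(\la)$ has multiplicity one in $L_\la$), hence is a single vertex $v_w$; thus the vertices $v$ with $\mu_v\in S_n\la$ are exactly the $n!$ points $v_w$, and $F(\mathbf t^{v_w})=e^{\mu_{v_w}}=w(e^\la)$. Next, a vertex $v$ is simplicial if and only if exactly $\binom n2$ of the inequalities~(\ref{gt}) are tight at it, which happens if and only if the graph on the triangle's entries whose edges are the tight inequalities is a spanning forest (each component then contains exactly one entry of row $0$, forced by regularity). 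A direct combinatorial analysis of which integer fillings produce such forests — hand in hand with the explicit greedy filling realizing $v_w$ — should show that the forest-vertices are precisely the $v_w$, and that at $v_w$ the $\binom n2$ primitive edge directions $r_{ij}$ form a $\mathbb Z$-basis of the lattice in the affine span (so $C_{v_w}$ is unimodular) with $F(\mathbf t^{r_{ij}})=w(x_j/x_i)$ for $i<j$. I expect this classification — in particular that the count is exactly $n!$ and that the tangent cones at the $v_w$ are unimodular — to be the main obstacle.

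The two remaining computations are then short. For $v_w$: since $C_{v_w}$ is unimodular simplicial with apex at the lattice point $v_w$, we have $\sigma(C_{v_w})=\mathbf t^{v_w}\big/\prod_{i<j}\bigl(1-\mathbf t^{r_{ij}}\bigr)$, and applying $F$ turns the numerator into $w(e^\la)$ and the denominator into $\prod_{i<j}\bigl(1-w(x_j/x_i)\bigr)=w\bigl(\prod_{i<j}(1-x_j/x_i)\bigr)$, giving exactly the $w$-summand of Weyl's formula. For any other vertex $v$ one must show $F(\sigma(C_v))=0$. Here $\mu_v$ lies in the relative interior of a positive-dimensional face of $\Pi$, so $\pi(C_v)=C_{\Pi,\mu_v}$ contains an affine line; writing its two opposite directions as $\pi(x)$ and $\pi(y)$ with $x,y\in C_v$ produces a nonzero lattice vector $u=x+y\in\ker\pi\cap C_v$. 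The vanishing then follows from the behaviour of integer-point transforms under lattice projections: collapsing a ray of the recession cone that lies in $\ker\pi$ kills the transform. Concretely one subdivides $C_v$ along the direction $u$, writes $\sigma(C_v)$ as an alternating sum of contributions of half-open simplicial cones, and watches the contributions telescope; the prototype is the one-variable identity $\tfrac1{1-z}+\tfrac1{1-z^{-1}}=1$, equivalently the fact that a polyhedron containing a line has zero integer-point transform.

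Finally, summing over all vertices, Brion's theorem gives
$$
s_\la=F(S(GT_\la))=\sum_{v}F(\sigma(C_v))=\sum_{w\in S_n}w\!\left(\frac{e^\la}{\prod_{i<j}(1-x_j/x_i)}\right),
$$
which is Weyl's character formula; and reading off the per-vertex statements established above — the $n!$ simplicial vertices $v_w$, their contributions, and the vanishing at all others — yields precisely the assertions of the theorem.
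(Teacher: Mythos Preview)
Your argument for the vanishing at non-simplicial vertices contains a genuine gap. You claim that the existence of a nonzero lattice vector $u\in C_v\cap\ker\pi$ forces $F(\sigma(C_v))=0$, invoking the principle that ``a polyhedron containing a line has zero integer-point transform.'' But $C_v$ is pointed and contains no line; it is only the \emph{image} $\pi(C_v)$ that does. The quantity $F(\sigma(C_v))$ is the specialization of the rational function $\sigma(C_v)$ under $t_{i,j}\mapsto F(t_{i,j})$, not the integer-point transform of $\pi(C_v)$; the pushforward of $S(C_v)$ along $\pi$ counts lattice points with fiber multiplicities and is a different object. A toy example shows your principle is false in general: take $C=\mathbb R_{\ge0}^3$ with $\pi(a,b,c)=(a,b-c)$, so that $(0,1,1)\in C\cap\ker\pi$ and $\pi(C)$ contains the line $\{0\}\times\mathbb R$; nevertheless
\[
F(\sigma(C))=\frac{1}{(1-x_1)(1-x_2)(1-x_2^{-1})}\neq 0.
\]
Your proposed stellar subdivision along $u$ produces pieces whose transforms carry a factor $1/(1-\mathbf t^{u})$, which specializes to $1/0$; there is no general mechanism making these poles telescope to zero, and in the example above they do not.

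The paper's proof of the vanishing is accordingly far more delicate than a one-line projection argument. It factors the tangent cone as a direct Minkowski sum $C_v=v+\sum_\Delta C_\Delta$ over the connected components $\Delta$ of the equality graph $\Gamma_v$, and then proves by induction on $|\Delta|$ that $F(\sigma(C_\Delta))=0$ whenever $\Delta$ contains a cycle. The induction step applies Brion's theorem to an auxiliary rational polyhedron obtained by shifting and truncating $C_\Delta$, uses an explicit classification of the edge vectors of $C_\Delta$, and in one base-type case checks by hand that two surviving simplicial contributions cancel. Your permutohedron picture correctly predicts \emph{which} vertices should contribute zero, but does not supply a reason \emph{why} the specialization annihilates $\sigma(C_v)$; that is exactly the content the induction provides. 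For the simplicial part your outline is essentially correct and parallel to the paper's, though the paper carries out the classification of simplicial vertices and the identification of their edge vectors directly via $\Gamma_v$ rather than through the map to the permutohedron.
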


In other words, the contributions of simplicial vertices constitute precisely the $n!$ summands in Weyl's character formula while the contributions of all other vertices vanish.

The case of a singular $\lambda$ will be considered in the end of the paper. The answer in that case is not quite as neat, nonetheless, one has a similar theorem (Theorem~\ref{sing}) describing the connection between Brion's formula and Weyl's character formula.

\section{Proof of the main theorem}

\vglue-2pt

We have fixed a regular dominant integral weight $\la$. To prove theorem~\ref{main} we classify the vertices of $GT_\lambda$ by matching them with certain graphs. It turns out that that simplicial vertices are precisely those corresponding to acyclic graphs. After that, with the help of an inductive argument, we show that the contributions of vertices corresponding to graphs with cycles vanish. Finally, we compute the contributions of the remaining (i.e. simplicial) vertices to complete the proof. 

According to its definition, our polytope is the intersection of the subspace given by the conditions $A_{0,j}=\lambda_j$ and of the half-spaces given by the inequalities~(\ref{gt}). We claim that in view of $\la$ being regular each of the latter inequalities provides a facet of $GT_\lambda$ (the bounding hyperplane of the half-space intersects the polytope in a facet). Indeed, otherwise one of the inequalities would be a consequence of the others which is obviously not the case.

Next, vertices of the polytope can be characterized with the following proposition.

\begin{proposition}\label{vert}
A GT-pattern $A$ constitutes a vertex of $GT_\lambda$ if and only if for any $1\le i\le n-1$ and $1\le j\le n-i$ the element $A_{i,j}$ is equal to at least one of its upper neighbors $A_{i-1,j}$ and $A_{i-1,j+1}$.
\end{proposition}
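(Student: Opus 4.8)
The plan is to characterize vertices as the points of $GT_\lambda$ at which a full-rank subset of the defining inequalities become equalities, and then to translate this rank condition into the stated combinatorial condition. Concretely, the polytope lives in the affine subspace cut out by $A_{0,j}=\lambda_j$, so it has dimension $\binom{n+1}{2}-n=\binom{n}{2}$, and its defining inequalities are the $2\binom{n}{2}$ constraints of the form $A_{i-1,j}\ge A_{i,j}$ and $A_{i,j}\ge A_{i-1,j+1}$. A point $A$ is a vertex if and only if the set of inequalities that are tight at $A$ has rank $\binom{n}{2}$, i.e. pins down all of the free coordinates $A_{i,j}$ with $i\ge 1$. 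So I would first establish the easy direction: if some $A_{i,j}$ (with $i\ge 1$) is strictly between both of its upper neighbours, then neither of the two inequalities immediately above $A_{i,j}$ is tight, and I claim no tight inequality involves $A_{i,j}$ at all except possibly those relating it to its lower neighbours $A_{i+1,j}$ and $A_{i+1,j-1}$; pushing through a small perturbation argument, one moves $A_{i,j}$ slightly (adjusting nothing else) and stays in the polytope, so $A$ is not a vertex.

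The substantive direction is the converse: assuming every $A_{i,j}$ with $i\ge 1$ equals one of its two upper neighbours, show $A$ is a vertex. The natural approach is induction on the rows. Row $0$ is fixed. Suppose rows $0,\dots,i-1$ are already determined (have no freedom) at $A$; I want to conclude the same for row $i$. For each $j$ in row $i$, the hypothesis gives $A_{i,j}=A_{i-1,j}$ or $A_{i,j}=A_{i-1,j+1}$, and the entry of row $i-1$ on the right-hand side is, by the inductive hypothesis, a fixed number. Hence each $A_{i,j}$ is forced to one of two explicit values; but it is not yet immediate that it is forced to a single value. To pin it down, I would argue that the value is uniquely determined by feasibility together with the chain inequalities $A_{i-1,j}\ge A_{i,j}\ge A_{i-1,j+1}$: if $A_{i-1,j}=A_{i-1,j+1}$ the value is forced regardless; if $A_{i-1,j}>A_{i-1,j+1}$, then "equal to an upper neighbour" still leaves two candidates, so here I need the actual value at $A$, not just its membership in a two-element set. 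The cleanest way is to work directly with the vertex criterion rather than induction on uniqueness: build the $\binom{n}{2}\times\binom{n}{2}$ system formed by picking, for each $(i,j)$ with $i\ge1$, the specific tight inequality that the hypothesis guarantees, and show this square system is triangular (hence invertible) with respect to the ordering of coordinates by rows — each chosen equation expresses $A_{i,j}$ in terms of an entry strictly higher up. That immediately gives rank $\binom{n}{2}$ and hence that $A$ is a vertex.

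So the cleanest write-up combines the two halves as follows. For "vertex $\Rightarrow$ condition", argue by contraposition with the explicit perturbation $A_{i,j}\mapsto A_{i,j}\pm\varepsilon$, noting the condition $A_{i+1,j}\ge A_{i,j}$ may also need checking but a sign of $\varepsilon$ can always be chosen so that all inequalities involving $A_{i,j}$ stay strict or stay satisfied. For "condition $\Rightarrow$ vertex", for each coordinate $A_{i,j}$ with $i\ge 1$ select the equation $A_{i,j}=A_{i-1,j}$ or $A_{i,j}=A_{i-1,j+1}$ provided by the hypothesis; these $\binom{n}{2}$ equations together with $A_{0,j}=\lambda_j$ form a system whose matrix is triangular for the top-to-bottom, left-to-right order on coordinates, so it has a unique solution, which must be $A$; therefore $A$ is the unique point of the affine subspace satisfying this subset of tight constraints, hence a vertex. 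The main obstacle is purely bookkeeping: making the triangularity/ordering argument precise and confirming that in the perturbation direction one genuinely stays inside $GT_\lambda$ (i.e. that $A_{i,j}$ being strictly between its upper neighbours really does leave room to move once one also respects the lower neighbours), which is where a careful choice of the sign of $\varepsilon$ is needed.
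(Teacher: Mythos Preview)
Your argument for ``condition $\Rightarrow$ vertex'' via the triangular system is clean and correct, and it is essentially a fleshed-out version of what the paper leaves implicit in its one-line appeal to maximality of the set of containing facets.

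There is, however, a real gap in your ``vertex $\Rightarrow$ condition'' direction. You propose to perturb only the single entry $A_{i,j}$ and claim that ``a sign of $\varepsilon$ can always be chosen'' so that all inequalities involving $A_{i,j}$ remain satisfied. This is not true: even when both upper constraints $A_{i-1,j}>A_{i,j}>A_{i-1,j+1}$ are strict, both lower constraints can be tight simultaneously. For instance, with $\lambda=(10,7,3,0)$ take the pattern with rows $(10,7,3,0)$, $(10,5,0)$, $(5,5)$, $(5)$. Here $A_{1,2}=5$ lies strictly between its upper neighbours $7$ and $3$, yet $A_{2,1}=A_{2,2}=5$, so increasing $A_{1,2}$ alone violates $A_{2,1}\ge A_{1,2}$ and decreasing it violates $A_{1,2}\ge A_{2,2}$. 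No sign of $\varepsilon$ works.

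The fix is to perturb not a single entry but the whole connected component of $(i,j)$ in the graph of tight constraints. The key observation is that if $A_{i-1,j}>A_{i,j}>A_{i-1,j+1}$, then in fact no entry of row $i-1$ equals $A_{i,j}$ (the row values are non-increasing and skip the value $A_{i,j}$), so the component of $(i,j)$ lies entirely in rows $\ge i$ and in particular misses row $0$. Shifting every entry of that component by the same small $\pm\varepsilon$ then preserves all tight constraints and all strict ones, exhibiting $A$ as an interior point of a segment in $GT_\lambda$. Equivalently, in the language of your rank criterion: since the component does not reach row $0$, the tight equalities together with $A_{0,\cdot}=\lambda$ leave that component's common value undetermined, so the system has rank strictly less than $\binom{n}{2}$.
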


\begin{proof}
This follows directly from the fact that a point is a vertex if and only if the set of facets it is contained in is maximal by inclusion (among the polytope's points).
\end{proof}

The following notion will turn out to be very handy. With every vertex $v$ of $GT_\la$ we associate a graph $\Gamma_v$.

First, consider the graph $T$ with $n(n+1)/2$ nodes corresponding to the elements of a GT-pattern (the nodes also denoted by pairs $(i,j)$) and such that for any $1\le i\le n-1$, $1\le j\le n-i$ the node $(i,j)$ is adjacent to $(i-1,j)$ and $(i-1,j+1)$.

Now $\Gamma_v$ can be defined as a subgraph of $T$ containing all of its nodes. An edge of $T$ lies in $\Gamma_v$ if and only if the two corresponding elements of GT-pattern $v$ are equal. Here are two examples of such graphs.

\begin{center}
\setlength{\unitlength}{1mm}
\begin{picture}(95,40)
\put(5,35){\circle{4}}
\put(4.2,34){5}
\put(17,35){\circle{4}}
\put(16,34){4}
\put(29,35){\circle{4}}
\put(28.2,34){2}
\put(41,35){\circle{4}}
\put(40.2,34){0}
\put(11,25){\circle{4}}
\put(10,24){4}
\put(23,25){\circle{4}}
\put(22,24){4}
\put(35,25){\circle{4}}
\put(34.2,24){0}
\put(17,15){\circle{4}}
\put(16,14){4}
\put(29,15){\circle{4}}
\put(28.2,14){0}
\put(23,5){\circle{4}}
\put(22,4){4}
\put(15.97,33.29){\line(-3,-5){3.94}}
\put(18.03,33.29){\line(3,-5){3.94}}
\put(12.03,23.29){\line(3,-5){3.94}}
\put(21.97,23.29){\line(-3,-5){3.94}}
\put(18.03,13.29){\line(3,-5){3.94}}
\put(39.97,33.29){\line(-3,-5){3.94}}
\put(33.97,23.29){\line(-3,-5){3.94}}
\pic{ex1}

\put(61,35){\circle{4}}
\put(60.2,34){5}
\put(73,35){\circle{4}}
\put(72,34){4}
\put(85,35){\circle{4}}
\put(84.2,34){2}
\put(97,35){\circle{4}}
\put(96.2,34){0}
\put(67,25){\circle{4}}
\put(66.2,24){5}
\put(79,25){\circle{4}}
\put(78,24){4}
\put(91,25){\circle{4}}
\put(90.2,24){0}
\put(73,15){\circle{4}}
\put(72,14){4}
\put(85,15){\circle{4}}
\put(84.2,14){0}
\put(79,5){\circle{4}}
\put(78,4){4}
\put(62.03,33.29){\line(3,-5){3.94}}
\put(74.03,33.29){\line(3,-5){3.94}}
\put(77.97,23.29){\line(-3,-5){3.94}}
\put(74.03,13.29){\line(3,-5){3.94}}
\put(95.97,33.29){\line(-3,-5){3.94}}
\put(89.97,23.29){\line(-3,-5){3.94}}
\pic{ex2}
\end{picture}\kern20pt\\
\vglue-7pt\centerline{\small \textbf{Fig. 1}\kern130pt \textbf{Fig. 2}}
\vskip10pt
\end{center}

Thus the edges of $\Gamma_v$ correspond to facets containing $v$ or, in other words, to facets of the tangent cone $C_v$. (It appears that the vertices of GT-polytopes were first described in terms of such graphs in the paper~\cite{kst}.)

Consider the connected components of such a graph $\Gamma_v$. In view of proposition~\ref{vert} there are exactly $n$ of them and each component contains exactly one vertex from row 0. Moreover, any component $\Delta$ has the following property:
\begin{enumerate}[label=\Alph*.]
\item If $\Delta$ contains both nodes $(i,j)$ and $(i,j+1)$, then $\Delta$ also contains $(i-1,j+1)$ and $(i+1,j)$, i.e. the two common neighbors of the former two nodes in $T$.
\end{enumerate}

We will refer to a subgraph $\Delta\subset T$ as \textit{ordinary} if it is connected, induced and has property A. In particular, we see that both the top and bottom rows containing nodes from $\Delta$ necessarily contain exactly one node therefrom. Every connected component of a graph $\Gamma_v$ is an ordinary subgraph.

With every ordinary subgraph $\Delta$ we associate a cone $C_\Delta$ in the ${n+1}\choose 2$-dimensional space containing $GT_\lambda$. A point $x$ belongs to $C_\Delta$ if and only if it has the following three properties.
\begin{enumerate}
\item For every $(i,j)$ not a node of $\Delta$ we have $x_{i,j}\,{=}\,0$.

\item Let $(i_\Delta,j_\Delta)$ be the single top node in $\Delta$. Then $x_{i_\Delta,j_\Delta}=0$.

\item For every edge in $\Delta$ the two coordinates of $x$ corresponding to the edge's endpoints satisfy the according inequality of form~(\ref{gt}).
\end{enumerate}
One sees that $C_\Delta$ is indeed a cone with its vertex at the origin.

We introduced this notion for the following reason. Consider a vertex $v$ of $GT_\lambda$. Then
$$
C_v=v+\sum_{\Delta\text{ component of }\Gamma_v} C_\Delta
$$
(Minkowski sum). The above sum is ``direct'' in the sense that the sum of the linear hulls of cones $C_\Delta$ is direct. The following key identity ensues:
\begin{equation}\label{decomp}
F(\sigma(C_v))=F(\mathbf t^v)\prod_{\Delta\text{ component of }\Gamma_v}F(\sigma(C_\Delta)).
\end{equation}

Now we distinguish the set of simplicial vertices.

\begin{proposition}
A vertex $v$ is simplicial if and only if the graph $\Gamma_v$ is acyclic.
\end{proposition}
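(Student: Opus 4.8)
The plan is to relate the combinatorial condition ``$\Gamma_v$ acyclic'' to the simplicial condition on the tangent cone $C_v$ via the direct-sum decomposition of $C_v$ into the cones $C_\Delta$ over the connected components $\Delta$ of $\Gamma_v$. A vertex $v$ of $GT_\lambda$ is simplicial precisely when $C_v$ is a simplicial cone, i.e. when the number of facets of $C_v$ equals $\dim C_v$. Since $C_v = v + \sum_\Delta C_\Delta$ is a direct sum of the cones $C_\Delta$, both the dimension count and the facet count are additive over the components: $\dim C_v = \sum_\Delta \dim C_\Delta$ and (because each facet of $C_v$ comes from a facet of exactly one $C_\Delta$, the facets being recorded by the edges of $\Gamma_v$) the number of facets of $C_v$ is $\sum_\Delta(\text{number of edges of }\Delta)$. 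Hence $C_v$ is simplicial if and only if every $C_\Delta$ is simplicial, and it suffices to prove the following local statement: for an ordinary subgraph $\Delta$, the cone $C_\Delta$ is simplicial if and only if $\Delta$ is a tree (equivalently, acyclic, since $\Delta$ is connected).

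The key is to compute $\dim C_\Delta$. I would argue that the linear hull of $C_\Delta$ is cut out inside the coordinate subspace indexed by the nodes of $\Delta$ by exactly one linear equation, namely $x_{i_\Delta,j_\Delta}=0$ for the unique top node of $\Delta$; all the other conditions defining $C_\Delta$ are genuine inequalities of the form~(\ref{gt}) that do not degenerate to equalities on any face of $C_\Delta$ of full dimension. (This uses regularity of $\lambda$ in the same way as the earlier remark that each inequality~(\ref{gt}) gives a facet of $GT_\lambda$: none of these inequalities is implied by the others together with the single equation.) Thus if $\Delta$ has $p$ nodes then $\dim C_\Delta = p-1$. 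Meanwhile the facets of $C_\Delta$ are in bijection with the edges of $\Delta$, so $C_\Delta$ has exactly as many edges as $\Delta$. Therefore $C_\Delta$ is simplicial $\iff$ $\#\text{edges}(\Delta) = p-1$ $\iff$ $\Delta$ is a tree $\iff$ $\Delta$ has no cycle. Summing over components, $C_v$ is simplicial iff no component of $\Gamma_v$ contains a cycle, i.e. iff $\Gamma_v$ is acyclic.

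The main obstacle I anticipate is justifying carefully that $C_\Delta$ really has $\#\text{edges}(\Delta)$ facets and that its linear span has codimension exactly one inside the coordinate subspace of $\Delta$'s nodes — in other words, that the inequalities defining $C_\Delta$ are irredundant and that none of them, beyond the single designated equation, is forced to be tight. For the codimension claim one should check that, given any assignment of real values to the interior nodes consistent with the \emph{strict} versions of~(\ref{gt}) along the edges of $\Delta$, one genuinely obtains an interior point of $C_\Delta$; this is a direct verification using property~A and the structure of $T$, and it is here that connectedness and the ``one node per extreme row'' feature of an ordinary subgraph are used. For irredundancy of the facet inequalities, note that removing one edge inequality would enlarge $C_\Delta$ — one can exhibit a point violating only that inequality, again by exploiting the local structure of $T$ around the corresponding pair of nodes. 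Once these two points are in place, the dimension-versus-facet comparison is immediate and the proposition follows.
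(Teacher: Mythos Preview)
Your argument is correct, but it takes a longer route than the paper's. The paper observes directly that $v$ is simplicial iff the number of facets through $v$ equals $\dim GT_\lambda=\binom{n+1}{2}-n$, that this number of facets equals the number of edges of $\Gamma_v$, and that a graph on $\binom{n+1}{2}$ nodes with exactly $n$ connected components (Proposition~\ref{vert}) has at least $\binom{n+1}{2}-n$ edges, with equality precisely when every component is a tree. That is the whole proof: one global edge count, no need to analyze individual cones $C_\Delta$.

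Your localized version --- reducing to ``$C_\Delta$ simplicial $\iff$ $\Delta$ a tree'' via $\dim C_\Delta = p-1$ and $\#\{\text{facets of }C_\Delta\}=\#\{\text{edges of }\Delta\}$ --- reaches the same inequality componentwise and is perfectly valid. The obstacle you flag (irredundancy of the edge-inequalities and full-dimensionality of $C_\Delta$ in its hyperplane) is genuine but easy to discharge by exhibiting, for each edge of $\Delta$, a point violating only that inequality; note however that this has nothing to do with regularity of $\lambda$, since $C_\Delta$ is defined purely from the combinatorics of $\Delta$ and does not see $\lambda$ at all. Regularity enters only in the identification of facets of $GT_\lambda$ with the inequalities~(\ref{gt}), which both approaches use upstream. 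The payoff of your approach is that it makes the component-by-component structure explicit, which is useful later; the payoff of the paper's approach is brevity.
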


(One sees  that Fig.~1 on page~\pageref{ex1} provides an example of a non-simplicial vertex while Fig.~2 provides an example of a simplicial one.)

\begin{proof}
A vertex $v$ is simplicial if and only if the number of facets containing it is equal to $\dim{GT_\lambda}={{n+1}\choose 2}-n$. The number of facets containing $v$ is equal to the number of edges in the graph $\Gamma_v$. However, if at least one of the connected components in $\Gamma_v$ is not a tree, then the component contains no fewer edges than nodes and the total number of edges is, therefore, greater than ${{n+1}\choose 2}-n$.
\end{proof}

Note that the acyclicity of $\Gamma_v$ is equivalent to no component therein containing two nodes from the same row.

In view of~(\ref{decomp}), the last part of Theorem~\ref{main} will now follow from the below fact.

\begin{theorem}\label{zero}
If an ordinary subgraph $\Delta$ contains a cycle, then $F(\sigma(C_\Delta))\,{=}\,0$.
\end{theorem}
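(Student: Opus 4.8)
The plan is to compute $\sigma(C_\Delta)$ directly from the cone's facet description and then show that the specialization $F$ kills it. Since the sum of the linear hulls in the decomposition $C_v = v + \sum C_\Delta$ is direct, and the coordinates $t_{i,j}$ on which a given $C_\Delta$ is supported are exactly the nodes of $\Delta$ (minus the top node, which is forced to zero by condition (2)), the cone $C_\Delta$ is a simplicial-like or more generally pointed rational cone sitting in the coordinate subspace indexed by the non-top nodes of $\Delta$. First I would unwind condition (3): the edges of $\Delta$ impose inequalities of the form $x_{i,j} \ge x_{i+1,j}$ or $x_{i,j} \ge x_{i,j-1}$ (i.e. the node is $\le$ its upper-left or upper-right neighbor, whichever edge is present), together with the root node pinned at $0$. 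Because $\Delta$ is connected, every node's coordinate is then squeezed between expressions in its neighbors and, following chains back to the root, one sees $C_\Delta$ is a full-dimensional cone in its ambient coordinate subspace whose extreme rays can be read off from $\Delta$.

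The crucial point is what $F$ does to these coordinates. Recall $F$ sends $t_{0,j} \mapsto x_1$ and $t_{i,j}\mapsto x_i^{-1}x_{i+1}$ for $i>0$ — so the image of $t_{i,j}$ depends only on the row $i$, not on $j$. Now suppose $\Delta$ has a cycle. By the remark just before Theorem~\ref{zero}, a cycle forces $\Delta$ to contain two distinct nodes $(i,j)$ and $(i,j+1)$ in the same row $i$; by property A it then contains the diamond $(i-1,j+1)$, $(i,j)$, $(i,j+1)$, $(i+1,j)$, and these four edges form a $4$-cycle. I would localize the argument to such a diamond. Along this $4$-cycle the two "descending" edges and two "ascending" edges impose $x_{i-1,j+1} \ge x_{i,j}$, $x_{i-1,j+1}\ge x_{i,j+1}$, $x_{i,j}\ge x_{i+1,j}$, $x_{i,j+1}\ge x_{i+1,j}$. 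The key algebraic fact is that $F$ assigns the \emph{same} monomial, call it $y_{i} = x_i^{-1}x_{i+1}$ (or $x_1$ when $i=0$), to both $t_{i,j}$ and $t_{i,j+1}$. In the integer-point transform of the cone, the generating-function contribution of the part of the cone coming from this diamond is a product over the cone's generators of factors $1/(1-\mathbf{t}^{g})$ times a monomial numerator; I want to exhibit, after specialization, a factor of the form $(1 - y_i/y_i) = 0$ in the numerator, or equivalently show that two of the specialized ray generators become equal and that $\sigma$ of the corresponding lower-dimensional piece is the difference $\mathbf x^a - \mathbf x^a$.

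Concretely, the cleanest route is the standard "one free line" criterion: if a rational polyhedral cone $C$ contains an affine line (equivalently, its lineality space is nonzero) then $\sigma(C) = 0$. After applying $F$, the cone $C_\Delta$ — which is pointed in $t$-space — need no longer be pointed in $x$-space, precisely because $F$ collapses rows. I would argue that when $\Delta$ has a cycle, the image of $C_\Delta$ under the monomial map dual to $F$ contains a nontrivial line: using the $4$-cycle above, the vector supported on $(i,j)$ and $(i,j+1)$ with values $+1$ and $-1$ (and $0$ elsewhere, on non-top nodes) maps to $0$ under the linear map on exponents induced by $F$, and it can be shown to lie in the lineality space of $F_*(C_\Delta)$ because moving along it stays inside the cone's image (the diamond inequalities on rows $i-1$ and $i+1$ are unaffected since those rows specialize independently, and the two within-row coordinates only appear in inequalities comparing them to rows $i\pm 1$). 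Hence $F(\sigma(C_\Delta))$, which equals $\sigma$ of this specialized cone up to the change of variables built into $F$, is the integer-point transform of a cone with a line and therefore vanishes. The main obstacle I anticipate is making the passage "$F$ applied to $\sigma$ of a cone equals $\sigma$ of the image cone" fully rigorous — $F$ is a monomial substitution, not a linear change of coordinates, so one must check that it is well-defined on $\sigma(C_\Delta)$ as a rational function (no division by zero arises, which is where regularity of $\la$ and the directness of the Minkowski decomposition get used) and commutes with the dual linear map on the exponent lattice; once that bookkeeping is set up, the lineality argument finishes it quickly.
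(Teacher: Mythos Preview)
Your argument rests on the identification ``$F(\sigma(C_\Delta))$ equals $\sigma$ of the image cone $\phi_*(C_\Delta)$ in $x$-space,'' and this identification is false. A monomial substitution such as $t_{i,j}\mapsto x_i^{-1}x_{i+1}$ corresponds to a linear map $\phi$ on exponent lattices, but substituting into the rational function $\sigma(C_\Delta)$ produces the \emph{pushforward} generating series (each lattice point of $\phi(C_\Delta)$ counted with the cardinality of its fibre), not the integer-point transform of the set $\phi(C_\Delta)$. Concretely, take $C=\mathbb R_{\ge 0}^2$ and specialise $t_1\mapsto x$, $t_2\mapsto x^{-1}$: one gets
\[
F(\sigma(C))=\frac{1}{(1-x)(1-x^{-1})}=\frac{-x}{(1-x)^2}\neq 0,
\]
even though the image $\phi(C)=\mathbb R$ is a line and has vanishing $\sigma$. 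So the lineality of $\phi_*(C_\Delta)$ is simply irrelevant to whether $F(\sigma(C_\Delta))$ vanishes, and the ``one free line'' criterion cannot be invoked here. (Incidentally, two of the four diamond inequalities you wrote are reversed---the upper-right neighbour is the \emph{smaller} one---and the vector $e_{(i,j)}-e_{(i,j+1)}$ will typically fail to lie in $C_\Delta$ once $(i,j)$ has an upper-left neighbour in $\Delta$; but these are secondary to the gap above.)

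The paper's proof is of a different nature and does not attempt to interpret $F$ geometrically. It runs by induction on $|\Delta|$: one perturbs $C_\Delta$ slightly and slices with a half-space to obtain a polyhedron $D$ with the same lattice points, applies Brion's theorem to $D$, and observes that each vertex of $D$ has tangent cone decomposing as $C_{\Delta_1}+C_{\Delta_2}$ for strictly smaller ordinary subgraphs. In most cases one of $\Delta_1,\Delta_2$ still contains a cycle and the induction hypothesis applies; in the base case the two surviving vertex contributions are computed directly and shown to cancel. The vanishing is thus a cancellation phenomenon among finitely many rational summands, not a lineality-space phenomenon.
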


To prove this theorem we will require an explicit description of the edges of $C_\Delta$.

\begin{proposition}\label{edges}
Let $\varepsilon$ be the minimal integer direction vector of an edge of  $C_\Delta$. Then there exists a pair of nonintersecting ordinary subgraphs $\Delta_1$, $\Delta_2$ such that every node of $\Delta$ is a node of one of them and with the following property: if $\Delta_1$ contains the node $(i_\Delta,j_\Delta)$, then all the coordinates of $\varepsilon$ belonging to $\Delta_1$ are zero while the coordinates belonging to $\Delta_2$ are all the same and equal to either $1$ or $-1$ (depending on the mutual arrangement of $\Delta_1$ and $\Delta_2$).
\end{proposition}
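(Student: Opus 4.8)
The plan is to describe the edges of $C_\Delta$ combinatorially by analyzing which defining inequalities of $C_\Delta$ become equalities along an edge. Recall that $C_\Delta$ is cut out inside the coordinate subspace spanned by the nodes of $\Delta$ by the single equation $x_{i_\Delta,j_\Delta}=0$ together with the inequalities of type~(\ref{gt}) associated to the edges of $\Delta$; since $\Delta$ is ordinary (hence connected), these edge-inequalities form a connected system. An edge (one-dimensional face) of $C_\Delta$ is obtained by keeping $x_{i_\Delta,j_\Delta}=0$ and turning all but one of the edge-inequalities of $\Delta$ into equalities, in such a way that the resulting face is genuinely one-dimensional. Turning an edge-inequality of $\Delta$ into an equality means identifying the two endpoint coordinates; so the set of edge-inequalities that remain strict partitions the nodes of $\Delta$ into groups on which $x$ is constant.

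First I would show that exactly one edge-inequality of $\Delta$ stays strict. If two or more stayed strict, the node set of $\Delta$ would split into three or more constancy-classes, and together with the normalization at $(i_\Delta,j_\Delta)$ this leaves at least two free parameters, so the face has dimension $\ge 2$ — not an edge. (That a single strict inequality does yield dimension exactly one I will check separately: one class contains $(i_\Delta,j_\Delta)$ and is forced to be $0$, the other class is a single free parameter.) So let $e=\{(i,j),(i',j')\}$ be the unique strict edge; removing $e$ from $\Delta$ disconnects it — here I use that $\Delta$ is ordinary and that collapsing all other edges must leave precisely two classes, which forces $e$ to be a bridge whose removal yields two pieces. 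Call the two components of $\Delta\setminus e$ (taken as induced subgraphs, with $e$'s endpoints distributed between them) $\Delta_1$ and $\Delta_2$. I would then verify that each $\Delta_i$ is again ordinary: connectedness is clear, and property~A for $\Delta_i$ follows from property~A for $\Delta$ because the "missing" common neighbors forced by property~A are joined to the relevant nodes by edges of $\Delta$ other than $e$, hence lie in the same component.

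Next, name the class containing $(i_\Delta,j_\Delta)$: it is exactly the $\Delta_i$ that contains $(i_\Delta,j_\Delta)$, call it $\Delta_1$. On $\Delta_1$ the coordinates of the edge direction $\varepsilon$ vanish (they are pinned to $0$), and on $\Delta_2$ they all equal a common value which, after rescaling to the \emph{minimal} integer vector, is $\pm1$; the sign is dictated by which of the two endpoints of $e$ lies in $\Delta_1$ versus $\Delta_2$, i.e.\ by whether $\varepsilon$ must increase or decrease across $e$ to respect the original inequality of form~(\ref{gt}) on $e$ — this is the "mutual arrangement" referred to in the statement. The main obstacle I anticipate is the dimension bookkeeping in the first step: carefully ruling out that a one-dimensional face could arise with a strict inequality whose removal does \emph{not} disconnect $\Delta$ (it can't, since then collapsing the rest still leaves $\Delta$ connected, giving only one class and a $0$-dimensional face), and conversely confirming that every bridge of $\Delta$ actually produces an edge of $C_\Delta$ rather than a lower-dimensional face; both hinge on property~A keeping the collapsed quotient of $\Delta$ a path-like object with no "hidden" coincidences. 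Everything else is a direct translation between the facet structure of $C_\Delta$ and the edge structure of $\Delta$.
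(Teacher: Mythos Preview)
Your central claim---that along an edge of $C_\Delta$ exactly one defining inequality remains strict, and that this inequality corresponds to a bridge of $\Delta$---is false when $\Delta$ contains cycles, which is precisely the case of interest. Look at the edge vectors depicted in Figures~4--6 of the paper: in each of them \emph{two} edges of $\Delta$ are strict (dotted), and in the $\Delta$ of Figure~4 there are no bridges at all (every edge lies on a cycle). Your dimension count ``two strict inequalities $\Rightarrow$ three constancy classes $\Rightarrow$ dimension $\ge 2$'' tacitly assumes that removing $k$ edges from $\Delta$ produces $k+1$ components, which holds only for trees. In a graph with cycles, removing a \emph{cut} consisting of several edges can still leave exactly two components and hence a one-dimensional face.

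The paper avoids this trap by reasoning directly about the values taken by $\varepsilon$ rather than counting strict inequalities. If $\varepsilon$ assumed three or more distinct values, one could merge two adjacent level sets (set all coordinates with one value equal to a neighboring value) and obtain a ray lying in strictly more facets, contradicting the maximality characterization of edges. This immediately gives exactly two values; the level sets are then automatically the ordinary subgraphs $\Delta_1$, $\Delta_2$, with the strict inequalities being all the edges of $\Delta$ joining them (a cut, not a single bridge). Your argument can be repaired by replacing ``one strict edge/bridge'' with ``a minimal edge-cut of $\Delta$ into two ordinary pieces,'' but at that point you are essentially reproducing the paper's level-set argument.
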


Here are several examples of such vectors $\varepsilon$ for various $\Delta$. Solid lines denote the edges of subgraphs $\Delta_1$ and $\Delta_2$ while dotted lines denote the remaining edges of $\Delta$. We will make use of these examples later on.

\begin{center}
\setlength{\unitlength}{1mm}
\begin{picture}(95,50)
\put(23,45){\circle{4}}
\put(22.2,44){0}
\put(17,35){\circle{4}}
\put(16.2,34){1}
\put(11,25){\circle{4}}
\put(10.2,24){1}
\put(23,25){\circle{4}}
\put(22.2,24){1}
\put(17,15){\circle{4}}
\put(16.2,14){1}
\multiput(21.97,43.29)(-0.394,-0.657){11}{\circle*{0}}
\put(15.97,33.29){\line(-3,-5){3.94}}
\put(18.03,33.29){\line(3,-5){3.94}}
\put(12.03,23.29){\line(3,-5){3.94}}
\put(21.97,23.29){\line(-3,-5){3.94}}
\pic{ex3}

\put(36,45){\circle{4}}
\put(35.2,44){0}
\put(30,35){\circle{4}}
\put(29.2,34){0}
\put(42,35){\circle{4}}
\put(40.5,34){-1}
\put(36,25){\circle{4}}
\put(34.5,24){-1}
\put(48,25){\circle{4}}
\put(46.5,24){-1}
\put(42,15){\circle{4}}
\put(40.5,14){-1}
\put(34.97,43.29){\line(-3,-5){3.94}}
\multiput(37.03,43.29)(0.394,-0.657){11}{\circle*{0}}
\multiput(31.03,33.29)(0.394,-0.657){11}{\circle*{0}}
\put(40.97,33.29){\line(-3,-5){3.94}}
\put(43.03,33.29){\line(3,-5){3.94}}
\put(37.03,23.29){\line(3,-5){3.94}}
\put(46.97,23.29){\line(-3,-5){3.94}}
\pic{ex4}

\put(61,45){\circle{4}}
\put(60.2,44){0}
\put(55,35){\circle{4}}
\put(54.2,34){0}
\put(67,35){\circle{4}}
\put(65.5,34){-1}
\put(61,25){\circle{4}}
\put(59.5,24){-1}
\put(67,15){\circle{4}}
\put(65.5,14){-1}
\put(59.97,43.29){\line(-3,-5){3.94}}
\multiput(62.03,43.29)(0.394,-0.657){11}{\circle*{0}}
\multiput(56.03,33.29)(0.394,-0.657){11}{\circle*{0}}
\put(65.97,33.29){\line(-3,-5){3.94}}
\put(62.03,23.29){\line(3,-5){3.94}}
\pic{ex5}

\put(86,45){\circle{4}}
\put(85.2,44){0}
\put(80,35){\circle{4}}
\put(79.2,34){0}
\put(92,35){\circle{4}}
\put(90.5,34){-1}
\put(86,25){\circle{4}}
\put(85.2,24){0}
\put(92,15){\circle{4}}
\put(91.2,14){0}
\put(84.97,43.29){\line(-3,-5){3.94}}
\multiput(87.03,43.29)(0.394,-0.657){11}{\circle*{0}}
\put(81.03,33.29){\line(3,-5){3.94}}
\multiput(90.97,33.29)(-0.394,-0.657){11}{\circle*{0}}
\put(87.03,23.29){\line(3,-5){3.94}}
\pic{ex6}
\end{picture}\kern30pt\\
\vglue-30pt
\centerline{\small \textbf{Fig. 3}\kern38pt \textbf{Fig. 4}\kern38pt \textbf{Fig. 5}\kern38pt \textbf{Fig. 6}}
\vskip10pt
\end{center}

\begin{proof}{Proof of Proposition \ref{edges}}
Suppose that $\Delta$ contains at least two nodes, i.e. $C_\Delta$ is not a point. We will employ the fact that the edges of a cone are rays with endpoints in the vertex for which the set of facets containing them is maximal by inclusion among such rays.

First of all, if the coordinates of $\varepsilon$ take at least three distinct values, then, clearly, we may choose one of those values and set all of the corresponding coordinates equal to another of the occurring values in such a way that the set of facets containing the edge increases.

Next, it is also clear that the coordinates taking a specific value comprise the set of nodes of an ordinary subgraph. Moreover, $\varepsilon_{i_\Delta,j_\Delta}=\nobreak0$ by definition of $C_\Delta$ and the minimality and integrality of $\varepsilon$ shows that all the nonzero coordinates of $\varepsilon$ are equal to $\pm1$.
\end{proof}

\begin{proof}{Proof of Theorem~\ref{zero}}
We will proceed by induction on the number of nodes in $\Delta$. We discuss the step of our induction by considering three cases, the base being covered by Case 3.

{\it Case 1.} Graph $\Delta$ contains just one node in row $i_\Delta+1$. We denote that node $(i_\Delta+1,j_1)$.

From Proposition~\ref{edges} it is evident that $C_\Delta$ has just one edge $e$ lying outside of the space $\{x_{i_\Delta+1,j_1}=0\}$, all coordinates of its direction vector $\varepsilon$ other than $\varepsilon_{i_\Delta,j_\Delta}$ are equal to $\pm 1$ (cf. Fig.~3).

Let $\Delta'$ be the ordinary subgraph obtained from $\Delta$ by deletion of the node $(i_\Delta,j_\Delta)$. We see that
$$
C_\Delta=C_{\Delta'}+e
$$
and
$$
F(\sigma(C_\Delta))=F(\sigma(C_{\Delta'})/(1-\mathbf t^\varepsilon))=0
$$
via the induction hypothesis. ($\Delta'$ contains a cycle.)

{\it Case 2.} The graph $\Delta$ contains two nodes in row $i_\Delta+1$ and, on top of that, for some $i_1>i_\Delta$ row $i_1+1$ contains more nodes than~$i_1$.

To compute $\sigma(C_\Delta)$ we define a certain rational polyhedron $D$ the set of integer points in which coincides with the set of integer points in $C_\Delta$. First we define the cone $C'=C_\Delta+\delta$ where $\delta$ is a rational vector with three properties.
\begin{enumerate}
\item $-\delta\in C_\Delta$, i.e. $C_\Delta\subset C'$.
\item $\delta$ is small enough, i.e. the set of integer points in $C'$ coincides with the set of integer points in $C_\Delta$.
\item $\delta_{i_\Delta+1,j_\Delta}>0$.
\end{enumerate}
Now we set $D=C'\cap\{x_{i_\Delta+1,j_\Delta}\le 0\}$.

Let us compute $F(\sigma(D))=F(\sigma(C_\Delta))$. The vertices of $D$ are intersections of the space $\{x_{i_\Delta+1,j_\Delta}=0\}$ with edges of the cone $C'$. Every edge of $C'$ intersecting that space is a translation of an edge of $C_\Delta$ with direction vector $\varepsilon$ such that $\varepsilon_{i_\Delta+1,j_\Delta}=-1$. Choose such an $\varepsilon$ and denote the corresponding vertex of $D$ via $v'$. We show that $F(\sigma(C_{v'}))=0$ ($C_{v'}$ being the tangent cone to $D$) which, in view of Brion's theorem, implies the desired identity. 

Indeed, $C_{v'}$ is equal to the direct sum $v'+C_{\Delta_1}+C_{\Delta_2}$ where $\Delta_1$ and $\Delta_2$ are defined as follows: $\Delta_1$ is the ordinary subgraph comprised of nodes $(i,j)$ with $\varepsilon_{i,j}=0$ and $\Delta_2$ is the ordinary subgraph comprised of nodes $(i,j)$ with $\varepsilon_{i,j}=-1$ and the node $(i_\Delta,j_\Delta)$. Diminishing $\delta$ if necessary, we obtain $S_{C_{v'}}=C_{\Delta_1}+C_{\Delta_2}$.

It follows that
$$
F(\sigma(C_{v'}))=F(\sigma(C_{\Delta_1})\sigma(C_{\Delta_2}))=0,
$$
since at least one of $\Delta_1$ and $\Delta_2$ contains at least two nodes from row $i_1+1$ and, consequently, contains a cycle.

{\it Case 3.} The graph $\Delta$ contains two nodes in row $i_\Delta+1$ and we are not within Case 2. This means that for a certain $i_1$ the graph $\Delta$ contains two nodes in each of rows $i_\Delta+1,\dots,i_1$ and one vertex in any row with number greater than $i_1$ but containing nodes from $\Delta$.

In this case our argument repeats that in the previous case except for the last step. $D$ does contain vertices $v'$ for which both $\Delta_1$ and $\Delta_2$ are acyclic. However, one sees that there are exactly two such vertices: $v'(1)$ and $v'(2)$ corresponding to direction vectors  $\varepsilon(1)$ and $\varepsilon(2)$. Denote the corresponding subgraphs via $\Delta_1(1)$, $\Delta_2(1)$, $\Delta_1(2)$, $\Delta_2(2)$. Then the left node of $\Delta$ in row  $i\in[i_\Delta+1,i_1]$ belongs to $\Delta_1(i)$ while the right one belongs to $\Delta_2(i)$ ($i=1,2$). The difference is that all nodes of $\Delta$ lying in rows below $i_1$ belong to $\Delta_2(1)$ but they also belong to $\Delta_1(2)$. (Figs.~5 and~6 provide examples of such vectors $\varepsilon(1)$ and $\varepsilon(2)$. Fig.~4 provides an example of a vector $\varepsilon$ for which $\Delta_2$ does contain a cycle and the induction hypothesis is applicable.)

A direct computation shows that
$$
F(\sigma(C_{\Delta_1(1)})\sigma(C_{\Delta_2(1)}))+F(\sigma(C_{\Delta_1(2)})\sigma(C_{\Delta_2(2)}))=0.
$$
Both cones $C_{\Delta_1(1)}+C_{\Delta_2(1)}$ and $C_{\Delta_1(2)}+C_{\Delta_2(2)}$ are simple (simplicial unimodular). The integer-point transform of such a cone may be expressed as the product of sums of infinite geometric series with common ratios equal to exponentials of the edges' direction vectors. 
Moreover, for almost all (all other than three) edges of the cone $C_{\Delta_1(1)}+C_{\Delta_2(1)}$ the following holds. If the direction vector of that edge is $\xi(1)$, then $C_{\Delta_1(2)}+C_{\Delta_2(2)}$ contains an edge with direction vector $\xi(2)$ such that $F(\mathbf t^{\xi(1)})=F(\mathbf t^{\xi(2)})$. This means that $1/(1-F(\mathbf t^{\xi(1)}))$ may be factored out in the course of our computation. These two observations let one carry out the computation.

The base of our induction corresponds to $\Delta$ consisting of four nodes and is covered by this case (and does not invoke the induction hypothesis).
\end{proof}

We move on to the discussion of simplicial vertices of $GT_\lambda$. For such a vertex $v$ the graph $\Gamma_v$ is made up of $n$ components each of which is a linear graph. Since every row of GT-pattern $v$ contains one element fewer than the row above, we see that all of these $n$ linear graphs start in row 0 and there is exactly one linear graph ending in each of our $n$ rows. Therefore, the multiset of values occurring in row $i+1$ of GT-pattern $v$ is obtained from the multiset of values occurring in row $i$ by the deletion of one element. This element is a coordinate of $\lambda$, i.e. $\lambda_{w_v^{-1}(i+1)}$ for some $w_v\in S_n$ (we add 1 in order to let the subscript lie within $[1,n]$). This defines the correspondence between simplicial vertices and permutations.

To complete the proof of Theorem~\ref{main} we are to establish the following.

\begin{proposition}\label{simpcontrib}
Every simplicial vertex $v$ of $GT_\lambda$ satisfies $\mu_v=w_v(\lambda)$ and
$$
F(\sigma(C_v))=w_v\bigg(\frac{e^\lambda}{\prod_{i<j}(1-x_j/x_i)}\bigg).
$$
\end{proposition}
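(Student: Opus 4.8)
The plan is to reduce everything to the decomposition formula~(\ref{decomp}) combined with an explicit, componentwise description of the simplicial tangent cone. Since $v$ is simplicial, each component $\Delta$ of $\Gamma_v$ is a linear graph (a path) from row $0$ down to some row, so the cones $C_\Delta$ are particularly simple. First I would compute $\mu_v$: by~(\ref{weight}), $(\mu_v)_i=\sum_j v_{i-1,j}-\sum_j v_{i,j}$, which is exactly the single value deleted in passing from row $i-1$ to row $i$; by the definition of $w_v$ this value is $\lambda_{w_v^{-1}(i)}$, hence $(\mu_v)_i=\lambda_{w_v^{-1}(i)}$, i.e.\ $\mu_v=w_v(\lambda)$ and $F(\mathbf t^v)=e^{\mu_v}=w_v(e^\lambda)$. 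This handles the monomial factor in~(\ref{decomp}).

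Next I would analyze a single component $\Delta$, a path starting at node $(0,j_\Delta)$ (so $i_\Delta=0$) and descending to some row $r$. By Proposition~\ref{edges}, the edges of the cone $C_\Delta$ are obtained by choosing a ``breakpoint'' along the path: $\Delta_1$ is an initial segment (containing the top node, on which $\varepsilon$ vanishes), $\Delta_2$ the complementary tail, on which $\varepsilon$ is constant $\pm1$. Concretely the edges of $C_\Delta$ are indexed by the non-top nodes of the path, and for the edge ``cutting below node $(i,j)$'' the direction vector $\varepsilon$ has all coordinates on the tail equal to $+1$ or $-1$ according to whether that part of the path goes down-left or down-right. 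One then checks $C_\Delta$ is unimodular simplicial with these as its extreme rays, so
$$
\sigma(C_\Delta)=\prod_{e\text{ edge of }C_\Delta}\frac{1}{1-\mathbf t^{\varepsilon(e)}}.
$$
Now apply $F$. Under the specialization $t_{i,j}\mapsto x_1$ for $i=0$ and $t_{i,j}\mapsto x_i^{-1}x_{i+1}$ for $i>0$, a monomial $\mathbf t^\varepsilon$ with $\varepsilon$ supported (with value $\pm1$) on a tail of the path becomes a telescoping product: tracking which rows the tail occupies and whether the $\pm1$ is $+$ or $-$, one gets $F(\mathbf t^{\varepsilon(e)})=(x_b/x_a)^{\pm1}$ for a suitable pair of indices $a,b$ read off from the path. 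The bookkeeping here is the crux: I expect the main obstacle to be verifying that, as $e$ ranges over the edges of $C_\Delta$ and $\Delta$ ranges over the $n$ components, the collected pairs $\{a,b\}$ (with appropriate sign/orientation) are exactly $\{w_v(i),w_v(j)\}_{i<j}$, each once. This is a combinatorial identification of the ``profile'' of the staircase path system with the inversions of $w_v$; once it is in place the factor $\prod_\Delta F(\sigma(C_\Delta))=\prod_{i<j}1/(1-x_{w_v(j)}/x_{w_v(i)})=w_v\!\big(\prod_{i<j}1/(1-x_j/x_i)\big)$ is immediate.

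Finally, combining the two factors via~(\ref{decomp}) yields
$$
F(\sigma(C_v))=F(\mathbf t^v)\prod_{\Delta}F(\sigma(C_\Delta))=w_v(e^\lambda)\cdot w_v\!\bigg(\frac{1}{\prod_{i<j}(1-x_j/x_i)}\bigg)=w_v\!\bigg(\frac{e^\lambda}{\prod_{i<j}(1-x_j/x_i)}\bigg),
$$
as claimed. To make the path-to-inversions step clean I would argue by induction on $n$, peeling off the bottom row: removing row $n-1$ of $GT_\lambda$ (a single element, belonging to the component that terminates there) leaves a simplicial vertex of $GT_{\lambda'}$ for the length-$(n-1)$ weight $\lambda'$ obtained by deleting the coordinate $\lambda_{w_v^{-1}(n)}$, and the edges of $C_v$ split into those of the smaller cone plus the $n-1$ new edges created by the new bottom node; checking that these $n-1$ new direction vectors specialize to $x_{w_v(j)}/x_{w_v(n)}$ (or its inverse) for $j<n$ is a direct computation with the specialization formula, and the inductive hypothesis supplies the rest.
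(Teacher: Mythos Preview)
Your approach is the paper's: invoke~(\ref{decomp}), observe that $C_v$ is simple, list the edge directions of each $C_\Delta$ via Proposition~\ref{edges}, compute each $F(\mathbf t^\varepsilon)$ as a telescoping ratio of two $x$'s, and assemble. The only difference is in the final bookkeeping that matches the $\binom{n}{2}$ ratios to the factors $w_v(x_j/x_i)$, and there your proposed induction has a flaw. Removing the single bottom node (row $n-1$) from a GT pattern does \emph{not} leave a GT pattern of rank $n-1$: the top row still has $n$ fixed entries, so the truncated array is a trapezoid, not a triangle, and there is no polytope $GT_{\lambda'}$ of which it could be a simplicial vertex. (Peeling off row $0$ instead does work---rows $1,\ldots,n-1$ form a genuine GT pattern for $\lambda'$ equal to $\lambda$ with $\lambda_{w_v^{-1}(1)}$ removed---but then the specialization $F$ must be reindexed as well, so the induction is not as clean as you suggest.)

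The paper bypasses induction with a direct bijection. Each edge of $C_v$ is parametrized by a pair $1\le a\le b\le n-1$: it is the tail, occupying rows $a,\ldots,b$, of the unique component of $\Gamma_v$ ending in row $b$. The sign is fixed by the single step of that path between rows $a-1$ and $a$ (not by the shape of the entire tail), and one checks it is $+1$ precisely when $w_v^{-1}(a)<w_v^{-1}(b+1)$; in either case $F(\mathbf t^\varepsilon)=w_v(x_j/x_i)$ where $\{i,j\}=\{w_v^{-1}(a),w_v^{-1}(b+1)\}$ with $i<j$. Since $(a,b)\mapsto\{w_v^{-1}(a),w_v^{-1}(b+1)\}$ is a bijection onto the two-element subsets of $\{1,\ldots,n\}$, the product over all edges is $w_v\!\big(\prod_{i<j}(1-x_j/x_i)^{-1}\big)$ on the nose. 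This is exactly the identification you flagged as the crux, handled in one line without recursion.
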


\begin{proof}
The first equality is a direct consequence of the definitions of permutation $w_v$ and the weight $\mu_v$.

Next, note that the cone $C_v$ is simple, i.e. to compute $F(\sigma(C_v))$ it suffices to find the direction vectors of its edges. By applying Proposition~\ref{edges} to the components of $\Gamma_v$ we obtain the below description of the direction vectors.

For every pair $1\le a\le b\le n-1$ there exists exactly one edge such that its direction vector $\varepsilon$ has one nonzero coordinate in each of the rows $a,\dots,b$ and has no other nonzero coordinates. The nonzero coordinates are equal to 1 if $w_v^{-1}(a)<w_v^{-1}(b+1)$ and are equal to $-1$ otherwise. In the former case we have $F(\mathbf t^\varepsilon)=x_{b+1}/x_a$ and in the latter we have $F(\mathbf t^\varepsilon)=x_a/x_{b+1}$. In both cases the monomial on the right is equal to
$$
w_v(x_{\max(w_v^{-1}(a),w_v^{-1}(b+1))}/x_{\min(w_v^{-1}(a),w_v^{-1}(b+1))}),
$$
and the proposition follows.
\end{proof}

\section{The case of a singular weight}

In the case of a singular $\lambda$ Theorem~\ref{main} does not hold. On one hand, in this case $GT_\lambda$ does have non-simplicial vertices with nonzero contributions. On the other, the number of vertices with nonzero contributions is less than $n!$. We fix a singular integral dominant weight $\lambda$ and state our result for this case.

\begin{theorem}\label{sing}
Vertices $v$ of $GT_\lambda$ with $F(\sigma(C_v))\neq 0$ are enumerated by elements of the orbit $S_n\lambda$. Let $v_\mu$ be the vertex corresponding to $\mu\in S_n\lambda$, then 
$$
F(\sigma(C_{v_\mu}))=\sum_{w\lambda=\mu} w\bigg(\frac{e^\lambda}{\prod_{i<j}(1-x_j/x_i)}\bigg).
$$
\end{theorem}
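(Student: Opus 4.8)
The plan is to reduce Theorem~\ref{sing} to the machinery already built for the regular case, treating a singular $\lambda$ as a degeneration. First I would set up a notion of vertices of $GT_\lambda$ and associated graphs $\Gamma_v$ exactly as in Section~2, noting that Proposition~\ref{vert} and the decomposition~(\ref{decomp}) did not use regularity of $\lambda$ and so remain valid verbatim. The one place regularity was used is the claim that every inequality~(\ref{gt}) gives a distinct facet: when $\lambda$ is singular some of these inequalities become redundant (forced equalities among equal $\lambda_j$'s), which is precisely why extra non-simplicial contributions survive. So I would first analyze, for each vertex $v$, which components of $\Gamma_v$ are still ``genuinely ordinary'' and argue that $F(\sigma(C_\Delta))$ still vanishes whenever the component $\Delta$ contains a cycle that is not forced by repeated entries in row~0 — this is an application of Theorem~\ref{zero}, or rather a mild adaptation of its proof, to the singular setting.

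Next I would identify which vertices survive with nonzero contribution. By~(\ref{decomp}), $F(\sigma(C_v))=0$ as soon as one component $\Delta$ of $\Gamma_v$ has $F(\sigma(C_\Delta))=0$, so the surviving vertices are exactly those for which every component contributes nonzero; by the singular analogue of Theorem~\ref{zero} this should force each component to be a linear graph (a tree) after quotienting out the forced coincidences coming from equal coordinates of $\lambda$. As in the regular case, such a vertex determines, for each row $i$, which coordinate $\lambda_{j}$ is ``dropped'' when passing from row $i$ to row $i+1$; but now only the \emph{multiset} of dropped values matters, since permuting equal $\lambda_j$'s among themselves gives the same pattern $v$. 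Hence the surviving vertices are in bijection with the ways of ordering the multiset $\{\lambda_1,\dots,\lambda_n\}$, i.e. with the orbit $S_n\lambda$; the vertex $v_\mu$ is the one whose weight $\mu_{v_\mu}$ equals $\mu$ (this equality is immediate from~(\ref{weight}) just as in Proposition~\ref{simpcontrib}).

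It then remains to compute $F(\sigma(C_{v_\mu}))$. Here the tangent cone $C_{v_\mu}$ need not be simple: a component of $\Gamma_{v_\mu}$ that is linear after collapsing forced coincidences may itself contain forced cycles, so $C_\Delta$ is a product of a simple cone with copies of lines. The idea is that $F(\sigma(C_\Delta))$ for such a $\Delta$ should equal a sum over the $w\in S_n$ compatible with the degeneration — precisely the permutations $w$ with $w\lambda=\mu$ that induce the given component structure — of the corresponding factor $w(\cdots)$ from Weyl's formula. Concretely, I would run the edge analysis of Proposition~\ref{edges} and Proposition~\ref{simpcontrib} on the ``collapsed'' linear components to get the honest edges contributing factors $1/(1-x_b/x_a)$ or $1/(1-x_a/x_b)$, and separately account for the line directions; multiplying these over the components and matching with the expansion of $\sum_{w\lambda=\mu} w(e^\lambda/\prod_{i<j}(1-x_j/x_i))$ should give the claimed identity. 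Summing over $\mu\in S_n\lambda$ recovers the full $S_n$ in Weyl's formula and hence $s_\lambda$, consistently with Theorem~1.3.

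The main obstacle I anticipate is the last computation: handling the non-simple tangent cones at the singular vertices. In the regular case every surviving cone was simplicial and unimodular, so $F(\sigma(C_v))$ was a clean product of geometric series; in the singular case the extra line factors (coming from coordinates that are forced equal but whose ``order'' is unconstrained) must be organized so that the product reassembles exactly as the sum over the coset $\{w : w\lambda = \mu\}$ of Weyl summands, rather than as something with spurious denominators. I expect one has to be careful that the specialization $F$ sends the ``redundant'' edge directions to monomials of the form $x_a/x_b$ that exactly reproduce the missing transpositions, and that no cancellation or pole appears along the way; a bookkeeping argument indexing the contributions by the stabilizer cosets of $\lambda$ in $S_n$ — together with the observation that the map $w \mapsto \mu_{v_{w\lambda}}$ is constant on those cosets — should make this precise.
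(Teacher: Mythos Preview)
Your approach differs substantially from the paper's. The paper does not attempt to analyze the tangent cones at vertices of the singular polytope directly. Instead it picks an auxiliary \emph{regular} weight $\lambda'$, shows that the normal fan of $GT_{\lambda'}$ refines that of $GT_\lambda$ (Proposition~\ref{refine}), and obtains a surjection $\pi$ from vertices of $GT_{\lambda'}$ onto those of $GT_\lambda$. Using polar duality as a valuation on the algebra of polyhedra, one gets
\[
[C_v]\ \approx\ \sum_{\pi(v')=v}\bigl[C_{v'}+(v-v')\bigr]
\]
modulo characteristic functions of polyhedra containing a line (Lemma~\ref{degen}); since $\sigma$ is itself a valuation that vanishes on such polyhedra, applying $F\circ\sigma$ and invoking the already-proved regular case (Theorem~\ref{main}, Proposition~\ref{simpcontrib}) finishes the argument with no new cone computations at all.

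Your direct route has a genuine gap. You write that for a surviving vertex $v_\mu$ the cone $C_\Delta$ ``is a product of a simple cone with copies of lines'' and plan to ``account for the line directions''. But the tangent cone at a vertex of a bounded polytope is always pointed, and each $C_\Delta$ is a direct summand of such a pointed cone, so there are no line directions to account for. The non-simpliciality of $C_{v_\mu}$ is not of this trivial type: it is an honest pointed non-simplicial cone, and computing $F(\sigma(C_{v_\mu}))$ requires either a triangulation or an inclusion--exclusion over its faces --- which, done correctly, amounts to rediscovering Lemma~\ref{degen}. Likewise, calling the needed extension of Theorem~\ref{zero} ``a mild adaptation'' is optimistic: once a component of $\Gamma_v$ meets row~$0$ in several nodes (exactly what happens when $\lambda$ is singular), the inductive mechanism of Cases~1--3, which removes \emph{the} top node or splits just below it, no longer applies as written. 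The normal-fan/valuation degeneration is the missing idea that dissolves both difficulties at once.
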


Note that this theorem is also true in the case of a regular $\lambda$ and in that case reduces to Theorem~\ref{main}. However, our proof of Thorem~\ref{sing} makes use of Theorem~\ref{main} and, on top of that, of several notions from polyhedral combinatorics. In particular, we invoke the notion of polar duality~\cite[ch. 5]{barv} and of a normal fan of a polytope (see, for instance,~\cite{fult} or almost any other textbook on toric geometry).

Let us proceed to the proof. We first point out that Proposition~\ref{vert} holds in this case verbatim. Next, for a vertex $v$ of $GT_\la$ we may define the graph $\Gamma_v\subset T$ in complete analogy with the regular case.

Now choose an arbitrary regular integral dominant weight $\lambda'$. Then the following holds.

\begin{proposition}\label{refine}
The normal fan of $GT_{\la'}$ refines the normal fan of $GT_{\lambda}$.
\end{proposition}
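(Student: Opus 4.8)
The plan is to show that the normal fan of $GT_{\la'}$ refines that of $GT_\lambda$ by exhibiting, for every face of $GT_\lambda$, a compatible face of $GT_{\la'}$ whose normal cone contains the normal cone of the $GT_\lambda$-face; equivalently, to show that every cone of the normal fan of $GT_\lambda$ is a union of cones of the normal fan of $GT_{\la'}$. Since both polytopes live in the same ambient space $\mathbb{R}^{\binom{n+1}{2}}$ and are cut out by inequalities of the \emph{same} form~(\ref{gt}) — only the fixed top row values $A_{0,j}$ differ — the natural strategy is to compare how linear functionals are maximized on the two polytopes. The normal fan depends only on which collections of facet-defining inequalities can be simultaneously tight, together with the combinatorics of the face lattice; so the heart of the matter is that the ``combinatorial type'' of $GT_\lambda$ (as encoded in the possible equality-graphs of points, in the spirit of Proposition~\ref{vert} and the graphs $\Gamma_v$) is a degeneration of that of $GT_{\la'}$.

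Concretely, I would argue as follows. A cone of the normal fan of a polytope $P$ is the set of linear functionals $\ell$ whose maximum on $P$ is attained exactly on a prescribed face $G$; this cone is spanned by the outer facet normals of the facets containing $G$. For $GT_{\la'}$ every inequality~(\ref{gt}) defines a genuine facet (this was observed in the regular case using regularity of $\la'$), so the facet normals of $GT_{\la'}$ are precisely the $\binom{n+1}{2}-\binom{n}{2}$ vectors $\pm(e_{i,j}-e_{i+1,j})$ and $\pm(e_{i+1,j}-e_{i,j+1})$ coming from~(\ref{gt}); for $GT_\lambda$ the facet normals form a subset of this list (some inequalities may become redundant). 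Given a functional $\ell$, its maximizing face on $GT_\lambda$ is determined by the set $E(\ell)$ of inequalities~(\ref{gt}) that are forced to be equalities on the optimal face — and crucially, because the inequalities are the same in both cases, the set of inequalities that become tight when maximizing $\ell$ over $GT_{\la'}$ is a \emph{superset} of $E(\ell)$ (moving from $\la$ to a regular $\la'$ can only add constraints to the optimal face, never remove the validity of an equality that was already forced by $\ell$ alone). This means the normal cone of $\ell$'s face in $GT_{\la'}$ is contained in the normal cone of $\ell$'s face in $GT_\lambda$, which is exactly the refinement statement: each normal cone of $GT_\lambda$ is tiled by normal cones of $GT_{\la'}$.

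The step I expect to be the main obstacle is making precise and correct the claim that ``passing from $\lambda$ to $\la'$ only adds tight inequalities.'' The cleanest way to nail this down is via the standard fact that $GT_\lambda$ is a \emph{Minkowski summand} of $GT_{\la'}$ (indeed $GT_{c\la' + \lambda}$ interpolates, and more directly $GT_{\la'}$ contains a translate of $GT_\lambda$ with the containment governed by the same facet inequalities), combined with the theorem that if $P$ is a Minkowski summand of $Q$ then the normal fan of $Q$ refines that of $P$. To justify the Minkowski-summand relation one checks that for $t \ge 0$ small the polytope $GT_{\lambda + t(\la' - \lambda)} = GT_\lambda + t\,GT_{\la' - \lambda}$ in the sense of the defining inequalities — the GT inequalities~(\ref{gt}) are ``totally unimodular/network'' type, so the polytope associated to a weight is additive in the weight, a well-known feature of GT polytopes. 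Thus I would (i) recall or prove the additivity $GT_{\mu_1+\mu_2} = GT_{\mu_1} + GT_{\mu_2}$ for dominant $\mu_1,\mu_2$, noting we only need a one-sided containment giving the summand relation; (ii) invoke the Minkowski-summand $\Rightarrow$ normal-fan-refinement lemma; (iii) conclude. The delicate point in (i) is handling the inequalities that are redundant for the singular $\lambda$ — one must check the additivity at the level of actual polytopes and not merely at the level of inequality systems, which is where one genuinely uses a structural property of GT polytopes rather than a formal manipulation.
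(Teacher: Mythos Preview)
Your Minkowski-sum strategy is a legitimate alternative, but it is heavier than the paper's argument and, as written, has two gaps. The paper bypasses all of this with a direct construction: given a vertex $v'$ of $GT_{\lambda'}$, replace each coordinate equal to $\lambda'_j$ by $\lambda_j$ to obtain a point $v$; Proposition~\ref{vert} shows $v$ is a vertex of $GT_\lambda$, and by construction $\Gamma_{v'}\subset\Gamma_v$, hence $C_v-v\subset C_{v'}-v'$. Dually, the normal cone at $v'$ sits inside the normal cone at $v$, which is exactly the refinement. No additivity lemma, no Minkowski sums.

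The gaps in your route are these. First, the additivity $GT_{\mu_1+\mu_2}=GT_{\mu_1}+GT_{\mu_2}$ requires the nontrivial inclusion $\subset$ (the one you flag as ``delicate''); the remark that ``we only need a one-sided containment'' is misleading, since the easy inclusion $\supset$ does not by itself exhibit $GT_\lambda$ as a summand. Second, even granting additivity, you obtain the summand relation only when $\lambda'-\lambda$ is dominant, which is not assumed; you would have to either restrict $\lambda'$ (harmless for the downstream application, since only one regular $\lambda'$ is needed) or argue separately that all regular $GT_{\lambda'}$ share a common normal fan.

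Finally, your informal tight-inequality heuristic has the inclusion backwards. Take $n=2$, $\lambda=(1,1)$, $\lambda'=(2,1)$, and $\ell=A_{1,1}$: the $\ell$-optimal face of $GT_\lambda$ (a single point) has both GT inequalities tight, while that of $GT_{\lambda'}$ has only one. With your stated direction the normal-cone inclusion would come out the wrong way. The refinement holds precisely because the tight set at a $GT_{\lambda'}$-vertex is \emph{contained in} that at the corresponding $GT_\lambda$-vertex---equivalently $\Gamma_{v'}\subset\Gamma_v$, which is exactly the paper's one-line observation.
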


\begin{proof}
It suffices to show that for any vertex $v'$ of $GT_{\lambda'}$ there exists a vertex $v$ of $GT_\la$ with 
$$
C_v-v\subset C_{v'}-v'
$$
(an inclusion of tangent cones with their vertices translated to the origin). This inclusion would follow from the inclusion $\Gamma_{v'}\subset \Gamma_v$.

As per Proposition~\ref{vert}, every coordinate of $v'$ is equal to one of the coordinates $\lambda'_j$. We define $v$ as follows: for all $j$ and all coordinates of $v'$ equal to $\la'_j$ we set the corresponding coordinate of $v$ equal to $\la_j$. On one hand, in view of Proposition~\ref{vert} with respect to $\lambda$, the obtained point is a vertex of $GT_\la$. On the other, the inclusion $\Gamma_{v'}\subset \Gamma_v$ is obvious.
\end{proof}

We have obtained a surjection
$$
\pi\colon\{\text{vertices of }GT_{\lambda'}\}\to\{\text{vertices of }GT_\lambda\},
$$
mapping $v'$ to $v$ from the above proof (i.e. such that the normal fan cone corresponding to $v$ contains the normal fan cone corresponding to $v'$).

Now we introduce several notions from the book~\cite{barv}. For an arbitrary closed convex rational (not necessarily bounded) polyhedron $P$ in our space $\mathbb R^{{n+1}\choose 2}$ let $[P]$ be its characteristic function equal to $1$ in points of $P$ and $0$ outside of $P$. Let $\mathcal P$ be the real vector space spanned by all such characteristic functions. We will refer to linear maps defined in points of $\mathcal P$ as \textit{valuations}.

\begin{theorem}[{\cite[Theorem 5.3]{barv}}]
Let $P^\circ$ be the polar dual of $P$. The map $[P]\mapsto [P^\circ]$ from $\mathcal P$ to itself extends to a certain valuation~$\mathcal D$.
\end{theorem}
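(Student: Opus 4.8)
The content of the assertion is \emph{well-definedness}: one must exhibit a linear operator $\mathcal D\colon\mathcal P\to\mathcal P$ with $\mathcal D([P])=[P^\circ]$, and once such a linear $\mathcal D$ exists it is automatically a valuation, since $[P\cup Q]+[P\cap Q]-[P]-[Q]$ is the zero element of $\mathcal P$ (it is the zero function) and hence is sent by the linear $\mathcal D$ to $0$. Thus the plan is: let $\widetilde{\mathcal P}$ be the free vector space on symbols $\langle P\rangle$, one for each \emph{nonempty} closed convex rational polyhedron; let $K$ be the kernel of the tautological map $\widetilde{\mathcal P}\to\mathcal P$; and show that the evident operator $\langle P\rangle\mapsto\langle P^\circ\rangle$ of $\widetilde{\mathcal P}$ carries $K$ into $K$. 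Then $\mathcal D$ is the operator induced on $\widetilde{\mathcal P}/K=\mathcal P$. (Restricting to nonempty $P$ is necessary: were $\emptyset$ admitted, $\langle\emptyset\rangle$ would lie in $K$ while $\emptyset^\circ=\mathbb R^m\neq\emptyset$; since $0\in P^\circ$ always, admitting only nonempty $P$ is consistent.)

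\textbf{Reduction to cones.} For $P\subset\mathbb R^m$ put $\widehat P=\overline{\{(tx,t):x\in P,\ t>0\}}\subset\mathbb R^{m+1}$, a closed convex cone with apex at the origin. The map $P\mapsto\widehat P$ respects every relation in $K$: a relation $\sum_i\alpha_i[P_i]=0$ is visible slice by slice on each hyperplane $\{t=c>0\}$, and the slice $\{t=0\}$ is handled by passing to the limit (recession cones). A short computation with support functions gives $\widehat{P^\circ}=R\big((\widehat P)^\ast\big)$, where $(\,\cdot\,)^\ast$ is the dual cone in $\mathbb R^{m+1}$ and $R$ negates the last coordinate; and $[P^\circ]$ is recovered from $[\widehat{P^\circ}]$ by intersecting with $\{t=1\}$. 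Since $R$ is linear and the homogenization and slicing operations descend to valuations, the theorem reduces to the claim that $C\mapsto[C^\ast]$ descends to a valuation on the span, inside $\mathcal P(\mathbb R^{m+1})$, of characteristic functions of closed convex rational cones with apex $0$.

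\textbf{The cone case.} Here one first peels off the lineality space $L\subseteq C$ (polarity turns $L$ into the orthogonal complement of $\mathrm{span}\,C^\ast$), reducing to pointed cones, and then inducts on the dimension. This step is the crux. The naive approach through triangulations fails, because a triangulation $C=\bigcup_i\Delta_i$ dualizes to $C^\ast=\bigcap_i\Delta_i^\ast$, i.e.\ to a \emph{fan of intersections} rather than a triangulation of $C^\ast$, so triangulation relations are not preserved by $C\mapsto[C^\ast]$. Nor is there a pointwise shortcut: one has $[y\in C^\ast]=[\,C\subseteq\{\langle x,y\rangle\le 0\}\,]$, but an ``indicator of containment/emptiness'' is provably not a valuation (additivity already fails on half-open intervals), so one cannot simply apply a known scalar valuation pointwise. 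What does work is the Brianchon--Gram identity $[C]=\sum_{F}(-1)^{\dim F}[\mathrm{cone}_F(C)]$ over the faces $F$ of the pointed cone $C$, which exhibits $C\mapsto[C]$ itself as an explicit combination of tangent-cone valuations, together with the fact that polarity exchanges the face lattice and normal fan of $C$ with those of $C^\ast$ and intertwines the face-tangent-cones accordingly. The plan is: write $[C^\ast]$ via Brianchon--Gram applied to $C^\ast$; identify each face-tangent-cone of $C^\ast$ as the polar of a face-tangent-cone of $C$; and, using the inductive hypothesis in dimension $m$ for the lower-dimensional cones living in the linear spans of proper faces, conclude that this expresses $[C^\ast]$ linearly in $[C]$. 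The base cases $\dim\le 1$ are immediate.

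I expect this final bookkeeping to be the main obstacle: one must track precisely which tangent cone of $C^\ast$ is the polar of which tangent cone of $C$, and verify that after quotienting by the ideal of lower-dimensional cones the correspondence is well defined and linear --- exactly the point at which the failure of polarity to commute with unions has to be made to cohere with the relations of the polytope algebra. Finally, all objects produced along the way (homogenizations, reflections, slices, triangulations, tangent cones) are rational, so $\mathcal D$ preserves the subspace of $\mathcal P$ spanned by rational polyhedra, as is needed for the application to $GT_\lambda$.
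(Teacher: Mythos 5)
First, a point of calibration: the paper does not prove this statement at all --- it is imported verbatim from Barvinok's book (Theorem 5.3 there; the result goes back to Lawrence), so your attempt is being measured against the literature rather than against an argument in the text. As a proof, your proposal has a genuine gap exactly where you predict one, and I do not think it is mere bookkeeping. The identity you lean on, $[C]=\sum_F(-1)^{\dim F}[\mathrm{cone}_F(C)]$ over the faces of a pointed cone, is the \emph{polytope} Brianchon--Gram identity and is false for unbounded polyhedra: already for $C=[0,\infty)\subset\mathbb R$ it would read $[[0,\infty)]=[[0,\infty)]-[\mathbb R]$. Even in a corrected form the apex face $\{0\}$ contributes $[C^\ast]$ itself with coefficient $+1$, so the identity collapses to a relation among the tangent cones at the positive-dimensional faces of $C^\ast$ and does not express $[C^\ast]$ in terms of anything to which an inductive hypothesis could apply. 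Worse, dualizing a face-by-face decomposition term by term presupposes that ``the tangent cone at a face'' is an operation that respects linear relations in $\mathcal P$; individual face-tangent-cones are not valuations of $C$ (only certain alternating sums of them are), so this step reproduces the original well-definedness problem rather than solving it. The homogenization step also silently uses that $P\mapsto[\mathrm{rec}(P)]$ is a valuation (your $t=0$ slice), which is true but is itself a lemma requiring proof.

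The strategic misstep is your dismissal of the pointwise route, because that route is the standard proof. You are right that $\mathbbm 1[P\subseteq H]$ is not a valuation of $P$ for an arbitrary closed convex $H$ (take $H$ a single point), but for $H$ a closed \emph{half-space} it is, and half-spaces are all that polarity needs: $[P^\circ](y)=\mathbbm 1[P\subseteq\{x:\langle x,y\rangle\le1\}]$. Concretely, for nonempty closed $P$ one has $\mathbbm 1[P\subseteq\{\ell\le c\}]=\chi([P])-\lim_{\epsilon\to0^+}\chi\bigl([P]\cdot[\{\ell\ge c+\epsilon\}]\bigr)$, where $\chi$ is the Euler-characteristic valuation equal to $1$ on every nonempty closed polyhedron. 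For each fixed $\epsilon$ the expression is the composition of the linear map $f\mapsto f-f\cdot[\{\ell\ge c+\epsilon\}]$ with $\chi$, hence linear in $[P]$, and on any fixed finite combination $\sum_i\alpha_i[P_i]$ the limit is eventually constant in $\epsilon$; therefore the limit is representation-independent and defines a linear functional on $\mathcal P$. Ranging over all $y$ yields $\mathcal D$ directly, with no homogenization, no recession-cone lemma, and no induction. If you want a complete proof to cite or reproduce, this (Lawrence's argument, essentially the one in Barvinok) is the one to use; the only prerequisite is the existence of $\chi$ on the algebra of possibly unbounded closed polyhedra, which is established earlier in the same book.
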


Next, let $\mathcal Q\subset\mathcal P$ be the subspace spanned by functions $[P]$ with $P$ containing an affine line. For $X,Y\in\mathcal P$ write $X\approx Y$ if $X-Y\in\mathcal Q$. A key role in our proof of Theorem~\ref{sing} is played by the following statement.

\begin{lemma}\label{degen}
For a vertex $v$ of $GT_\la$ we have
$$
[C_v]\approx \sum_{\pi(v')=v} [C_{v'}+(v-v')]
$$
(on the right every cone has its vertex translated to $v$).
\end{lemma}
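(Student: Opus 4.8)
The plan is to work entirely at the level of tangent cones and their normal-fan structure, exploiting Proposition~\ref{refine} together with a standard valuation identity for polyhedral subdivisions. First I would recall the precise meaning of the surjection $\pi$: the normal fan cone $N_v$ of $GT_\lambda$ at $v$ is the union of the normal fan cones $N_{v'}$ of $GT_{\lambda'}$ over all $v'$ with $\pi(v')=v$, because the fan of $GT_{\lambda'}$ refines that of $GT_\lambda$. Passing to tangent cones is precisely passing to polar duals of the (pointed) normal cones, and the polar duality valuation $\mathcal D$ of Theorem~[Barvinok 5.3] turns unions of cones into the kind of inclusion–exclusion identities that hold modulo $\mathcal Q$. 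So the strategy is: (i) write the obvious set-theoretic decomposition of $N_v$ into the $N_{v'}$; (ii) turn it into an identity of characteristic functions via inclusion–exclusion; (iii) apply $\mathcal D$ to dualize; (iv) observe that all the correction terms — intersections $N_{v'}\cap N_{v''}$ along shared faces — dualize to cones that contain affine lines, hence lie in $\mathcal Q$ and drop out under $\approx$.

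More concretely, the key steps in order are as follows. Step 1: fix the vertex $v$ of $GT_\lambda$ and consider the polyhedral complex obtained by intersecting the fan of $GT_{\lambda'}$ with the cone $N_v$; its maximal cones are exactly the $N_{v'}$, $\pi(v')=v$, and together they subdivide $N_v$. Step 2: by the standard valuation property of such a subdivision, $[N_v] = \sum_{\pi(v')=v}[N_{v'}] + (\text{lower-dimensional terms})$, where the lower-dimensional terms are integer combinations of characteristic functions of the proper common faces of the $N_{v'}$ inside $N_v$; each such face is a cone that is not full-dimensional, i.e.\ lies in a proper linear subspace of $\mathbb R^{\binom{n+1}{2}}$ — actually one must be slightly more careful and use that these faces, being shared by two distinct maximal cones of a fan, are not pointed in the sense that their polar duals contain lines. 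Step 3: apply the valuation $\mathcal D$ to the whole identity. Since $\mathcal D([N_w]) = [N_w^\circ]$ and, after the translation bookkeeping, $N_v^\circ$ is (a translate of) $C_v - v$ while $N_{v'}^\circ$ is $C_{v'}-v'$, the main terms give exactly the two sides of the asserted congruence. Step 4: argue that $\mathcal D$ sends each correction term into $\mathcal Q$: the polar dual of a non-pointed cone (one containing a line, equivalently whose linear span is cut out together with its lineality) is a lower-dimensional polyhedron lying in an affine subspace — more to the point, a shared facet of two normal cones has a nontrivial lineality direction whose polar dual is a full line, so $\mathcal D$ of its characteristic function lies in $\mathcal Q$. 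Hence modulo $\approx$ only the maximal terms survive, which is the claim.

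The main obstacle I anticipate is Step 4 together with the exact form of Step 2: I need the inclusion–exclusion correction terms to be characteristic functions of cones that genuinely contain affine lines after dualizing, and the cleanest way to guarantee this is to note that a face $N_{v'}\cap N_{v''}$ common to two distinct full-dimensional cones of a complete fan, when regarded as living in the ambient space $\mathbb R^{\binom{n+1}{2}}$, has polar dual equal to the Minkowski sum of the dual of the face within its span and the orthogonal complement of that span — and the orthogonal complement is a positive-dimensional linear subspace, hence an affine line is contained. One subtlety is that the normal fan of $GT_\lambda$ lives in the quotient by the lineality coming from the ambient $\binom{n+1}{2}$-dimensional space being larger than $\dim GT_\lambda$; I would handle this either by restricting throughout to the linear span of $GT_\lambda$ (where the normal fan is genuinely complete and pointed at the apex) or by carrying the extra lineality explicitly — in both cases the extra lineality is harmless precisely because it lands any offending cone in $\mathcal Q$. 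A secondary bookkeeping nuisance is tracking the translations: $\mathcal D$ is defined on cones with apex at the origin, so one applies it to $C_v - v$ and to the $C_{v'}-v'$, then re-translates everything to the common point $v$ at the end, which is exactly how the statement is phrased. Modulo these normalizations the argument is a direct application of polar duality to a fan refinement, and I would present it in that order, flagging the lineality point as the one place requiring care.
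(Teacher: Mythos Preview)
Your approach is essentially identical to the paper's: write the normal-fan decomposition $[D_v]=\sum_{\pi(v')=v}[D_{v'}]+S$ with $S$ supported on cones of strictly smaller dimension, apply the polar-duality valuation $\mathcal D$, and observe that such lower-dimensional cones dualize to cones containing a line, hence land in $\mathcal Q$. The paper's proof is just a terser version of your Steps~1--4; your extra remarks on lineality and translation bookkeeping are correct but not needed beyond what you already sketch.
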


\begin{proof}
Let $u$ be a vertex of $GT_\la$ or $GT_{\la'}$. Let $D_u$ denote the corresponding cone in the corresponding normal fan. As per the definition of $\pi$,
\begin{equation}\label{fan}
[D_v]=\sum_{\pi(v')=v}[D_{v'}]+S;
\end{equation}
here $S$ is a linear combination of functions of the form $[K]$ where $K$ is a cone of dimension less than ${n+1}\choose 2$. For such a $K$ the cone $K^\circ$ is a cone containing an affine line. Consequently, by applying the valuation $\mathcal D$ to~(\ref{fan}) we obtain the statement of the lemma (up to a translation by $-v$).
\end{proof}

To complete the proof we will also require the following fact.

\begin{theorem}
The map $[P]\mapsto\sigma(P)$ from $\mathcal P$ to $\mathbb R(\{t_{i,j}\})$ extends to a certain valuation $\mathcal F$. Furthermore, we have $\mathcal F([P])=0$ for any function $[P]\approx 0$.
\end{theorem}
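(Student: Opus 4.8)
The plan is to exhibit $\mathcal F$ as the composition of two valuations whose existence is essentially standard. First I would recall that the assignment $P\mapsto S(P)$ — the naive sum of $\mathbf x^a$ over lattice points — is \emph{not} a valuation on all of $\mathcal P$ (it diverges on many unbounded polyhedra), but the regularized version $P\mapsto\sigma(P)$ \emph{is}: this is Lawrence--Khovanskii--Pukhlikov, stated e.g. in~\cite[ch. 13]{barv}, which gives a valuation $\mathcal S\colon\mathcal P\to\mathbb R(\{t_{i,j}\})$ with $\mathcal S([P])=\sigma(P)$ for every rational polyhedron $P$, and crucially with $\mathcal S([P])=0$ whenever $P$ contains an affine line. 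Composing with the specialization homomorphism $F$ (which is a ring homomorphism on the subring of $\mathbb R(\{t_{i,j}\})$ on which it is defined, hence in particular $\mathbb R$-linear where defined) yields the desired $\mathcal F=F\circ\mathcal S$.

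The key steps, in order, are: \emph{(i)} invoke the structure theorem for the algebra $\mathcal P$ of polyhedral characteristic functions and the fact that $\mathcal Q\subset\mathcal P$, the span of $[P]$ for $P$ containing a line, is a subspace on which $\sigma$ vanishes — this is exactly \cite[Theorem 13.8 / Corollary 13.8a]{barv} or the analogous statement in~\cite{beckrob}; \emph{(ii)} conclude that $\mathcal S$ descends to a well-defined $\mathbb R$-linear map $\mathcal P/\mathcal Q\to\mathbb R(\{t_{i,j}\})$; \emph{(iii)} check that $F$, a priori a rational specialization, is actually well-defined on every $\sigma(P)$ arising here — indeed $\sigma(P)$ is a rational function whose denominator is a product of factors $1-\mathbf t^{\varepsilon}$ with $\varepsilon$ an edge direction, and $F(1-\mathbf t^\varepsilon)=1-x_a/x_b$ or $1-x_b/x_a$ is a nonzero element of $\mathbb R(x_1,\dots,x_n)$, so $F$ is defined on the relevant localization and is a ring homomorphism there; \emph{(iv)} set $\mathcal F=F\circ\mathcal S$ and read off both assertions: $\mathcal F([P])=F(\sigma(P))$ by construction, and $\mathcal F([P])=0$ for $[P]\approx 0$ because then $[P]\in\mathcal Q$ so $\mathcal S([P])=0$.

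The main obstacle — and the only point that requires genuine care rather than citation — is step \emph{(iii)}: one must be sure that applying $F$ commutes with the valuation structure, i.e.\ that $F$ does not introduce indeterminacy ($0/0$) on the rational functions $\sigma(P)$ that occur. The clean way around this is to observe that for \emph{every} rational polyhedron $P$ in our ambient space the poles of $\sigma(P)$ lie along hyperplanes $\mathbf t^{\varepsilon}=1$ with $\varepsilon$ an integer vector that is a difference of two vertices / an edge direction of a tangent cone, and for all such $\varepsilon$ occurring with nodes of $T$ the specialization $F(\mathbf t^{\varepsilon})$ is a genuine monomial $x_a^{\pm1}x_b^{\mp1}\neq 1$ in $\mathbb R(x_1,\dots,x_n)$; hence $F$ extends to a ring homomorphism on the subring of $\mathbb R(\{t_{i,j}\})$ generated by all these $\sigma(P)$, and precomposition of a valuation with a homomorphism is again a valuation. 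With that settled the theorem is immediate.
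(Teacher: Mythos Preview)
You have misread the statement. The valuation $\mathcal F$ is simply the linear extension of $[P]\mapsto\sigma(P)$, with target the field $\mathbb R(\{t_{i,j}\})$; the specialization $F$ plays no role here, and the similarity of notation is accidental. What you call $\mathcal S$ in step~(i) \emph{is} the $\mathcal F$ of the theorem, so your steps (i)--(ii) already constitute a correct proof by citation. This matches the paper's approach: the paper declares the valuation property to follow from the definition of $\sigma$, and for the vanishing on $\mathcal Q$ gives the one-line argument that a rational line in $P$ has an integer direction vector $u$, whence $\mathbf t^u\sigma(P)=\sigma(P)$ forces $\sigma(P)=0$ in the field.

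Steps (iii)--(iv) are therefore superfluous, and step~(iii) is in fact false as stated. For an \emph{arbitrary} rational polyhedron $P\subset\mathbb R^{\binom{n+1}{2}}$ the edge directions $\varepsilon$ are arbitrary integer vectors, and many of them satisfy $F(\mathbf t^{\varepsilon})=1$: for instance, take $\varepsilon$ with exactly two nonzero coordinates $\varepsilon_{i,j}=1$, $\varepsilon_{i,j'}=-1$ in the same row $i\ge 1$; then $F(\mathbf t^{\varepsilon})=(x_i^{-1}x_{i+1})(x_i^{-1}x_{i+1})^{-1}=1$. So $F$ does \emph{not} extend to a ring homomorphism on the full image of $\sigma$, and $F\circ\mathcal S$ is not a well-defined valuation on all of $\mathcal P$. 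This does not affect the theorem, since $F$ is not part of it, but it does mean that the later application of $F$ to identity~(\ref{degbri}) must be justified by other means (namely, the identity already holds in $\mathbb R(\{t_{i,j}\})$ before specializing, and each individual term $\sigma(C_{v'})$ has a well-defined image under $F$).
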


\begin{proof}
The first part follows immediately from the definition of $\sigma$. To prove the second part note that, since $P$ is rational, it contains an affine line with an integer direction vector, i.e. there exists a nonzero integer vector $u$ such that $\mathbf t^u\sigma(P)=\sigma(P)$.
\end{proof}

\begin{proof}{Proof of Theorem~\ref{sing}}
Choose a vertex $v$ of $GT_\la$ and apply valuation $\mathcal F$ to the identity from Lemma~\ref{degen} to obtain
\begin{equation}\label{degbri}
\sigma(C_v)=\sum_{\pi(v')=v}\sigma(C_{v'}+(v-v'))=\sum_{\pi(v')=v}\mathbf t^{v-v'}\sigma(C_{v'}).
\end{equation}

Next, consider a simplicial vertex $v'$ of $GT_{\la'}$ corresponding to $w\in S_n$, i.e. with $\mu_{v'}=w\la'$. The definition of $\pi$ coupled with linearity properties implies that $\mu_{\pi(v')}=w\lambda$. Consequently, the set of simplicial vertices $v'$ (i.e. such that $F(\sigma(C_{v'}))\neq 0$) in $\pi^{-1}(v)$ is

(a) empty, if $\mu_v$ is not of the form $w\la$,

(b) is comprised of those $v'$ that correspond to permutations $w'$ with $w'\lambda=w\lambda$ if $\mu_v=w\lambda$.

Theorem~\ref{sing} now follows from identity~(\ref{degbri}) and Proposition~\ref{simpcontrib}.
\end{proof}

\vskip10pt

\end{document}